\documentclass[11pt,reqno]{amsart}
\usepackage{latexsym}
\usepackage{mathtools}
\usepackage{hyperref}
\usepackage{a4wide}
\usepackage{enumerate}
\newcommand{\R}{\ensuremath{\mathbb{R}}}

\newcommand{\T}{S}
\newcommand{\E}{\mathrm{e}}

\newcommand{\W}{h}
\newcommand{\WW}{\mathcal{W}}
\newcommand{\eL}{\mathcal{L}}

\newcommand{\pd}{\partial}
\newcommand{\cd}{\nabla}

\newcommand{\Ges}{G_{\varepsilon,\sigma}}
\newcommand{\lb}{\left(}
\newcommand{\lsb}{\left[}
\newcommand{\lcb}{\left\{}
\newcommand{\rb}{\right)}
\newcommand{\rsb}{\right]}
\newcommand{\rcb}{\right\}}
\theoremstyle{plain}
\numberwithin{equation}{section}
\newtheorem{thm}{Theorem}[section]

\newtheorem{lem}[thm]{Lemma}

\newtheorem{cor}[thm]{Corollary}

\newtheorem{prop}[thm]{Proposition}

\newtheorem*{conds*}{Conditions}
\newtheorem*{auxconds*}{Ancillary Conditions}
\newtheorem*{props*}{Properties}
\theoremstyle{remark}

\def\ba #1\ea {\begin{eqnarray} #1\end{eqnarray}}
\def\benn #1\eenn {\begin{eqnarray*} #1\end{eqnarray*}}
\def\ba #1\ea {\begin{align} #1\end{align}}
\def\bann #1\eann {\begin{align*} #1\end{align*}}
\def\ben #1\een {\begin{enumerate} #1\end{enumerate}}
\def\bi #1\ei {\begin{itemize} #1\end{itemize}}

\title[Cylindrical estimates]{Cylindrical estimates for hypersurfaces moving by convex curvature functions}

\author{Ben Andrews}
\address{Mathematical Sciences Institute, Australian National University, ACT 0200 Australia}
\address{Mathematical Sciences Center, Tsinghua University, Beijing 100084, China}
\address{Morningside Center for Mathematics, Chinese Academy of Sciences, Beijing 100190, China}
\email{ben.andrews@anu.edu.au}
\thanks{2010 \emph{Mathematics Subject Classification}. 53C44, 35K55, 58J35.}
\thanks{Research partially supported by Discovery grant DP120102462 of the Australian Research Council}
\author{Mat Langford}
\address{Mathematical Sciences Institute, Australian National University, ACT 0200 Australia}
\email{mathew.langford@anu.edu.au}
\thanks{The second author gratefully acknowledges the support of an Australian Postgraduate Award during the completion of this work.}

\begin{document}

\begin{abstract}
We prove a complete family of `cylindrical estimates' for solutions of a class of fully non-linear curvature flows, generalising the cylindrical estimate of Huisken-Sinestrari \cite[Section 5]{HuSi09} for the mean curvature flow. More precisely, we show that, for the class of flows considered, an $(m+1)$-convex ($0\leq m\leq n-2$) solution becomes either strictly $m$-convex, or its Weingarten map approaches that of a cylinder $\R^m\times S^{n-m}$ at points where the curvature is becoming large. This result complements the convexity estimate proved in \cite{hypersurfaces} for the same class of flows.
\end{abstract}

\maketitle

\section{Introduction}

Let $M$ be a smooth, closed manifold of dimension $n$, and $X_0:M\to\R^{n+1}$ a smooth hypersurface immersion. We are interested in smooth families $X:M\times [0,T)\to \R^{n+1}$ of smooth immersions $X(\cdot,t)$ solving the initial value problem
\begin{equation}\label{eq:CF}\tag{CF}
\begin{dcases}
\;\; \pd_t X(x,t)=-F(\WW(x,t))\nu(x,t)\,,\\
\;\; X(\cdot,0)=X_0\,.
\end{dcases}
\end{equation}
where $\nu$ is the outer normal field of the evolving hypersurface $X$ and $\WW$ the corresponding Weingarten curvature. In order that the problem \eqref{eq:CF} be well posed, we require that $F(\WW)$ be given by a smooth, symmetric, degree one homogeneous function $f:\Gamma\to\R$ of the principal curvatures $\kappa_i$ which is monotone increasing in each argument. The symmetry of $f$ ensures that $F$ is a smooth, basis invariant function of the components of the Weingarten map (or an orthonormal frame invariant function of the components of the second fundamental form) \cite{Gl}. Monotonicity implies monotonicity with respect to the Weingarten curvature, which ensures that the flow is (weakly) parabolic. This guarantees local existence of solutions of \eqref{eq:CF}, as long as the principal curvature $n$-tuple of the initial data lies in $\Gamma$ \cite[Main Theorem 5]{Baker}.

For technical reasons, we require the following additional conditions
\begin{conds*}
\mbox{}
\ben[(i)]
\item\label{cond:homogeneity} that $\Gamma$ is a convex cone\footnote{We remark that this condition can be slightly weakened. See \cite{hypersurfaces}.}, and $f$ is homogeneous of degree one; and
\item\label{cond:convexity} that $f$ is convex.
\newcounter{enumi_saved}
\setcounter{enumi_saved}{\value{enumi}}
\een
\end{conds*}
Then, since the normal points out of the region enclosed by the solution, we may assume that $(1,\dots,1)\in\Gamma$, and we we lose no generality in assuming that $f$ is normalised such that $f(1,\dots,1)=1$. 

The additional conditions (\ref{cond:homogeneity})-(\ref{cond:convexity}) have several consequences. Most importantly, they allow us to obtain a preserved cone $\Gamma_0\subset\Gamma$ of curvatures for the flow \cite[Lemma 2.4]{hypersurfaces}. This allows us to obtain uniform estimates on any degree zero homogeneous function of curvature along the flow (Lemma \ref{lem:homogeneousbounds}); in particular, we deduce uniform parabolicity of the flow (Corollary \ref{cor:uniformparabolicity}). The convexity condition then allows us to apply the second derivative H\"older estimate of Evans \cite{E} and Krylov \cite{K} to deduce that the solution exists on maximal time interval $[0,T)$, $T<\infty$, such that $\max_{M\times\{t\}}F\to\infty$ as $t\to T$, as in \cite[Proposition 2.6]{surfaces}. This paper addresses the behaviour of solutions as $F\to\infty$. Let us recall the following curvature estmate \cite{hypersurfaces} (cf. \cite{HuSi99a,HuSi99b}):
\begin{thm}[Convexity Estimate]\label{thm:AC}
Let $X:M\times[0,T)\to\R^{n+1}$ be a solution of \eqref{eq:CF} such that $f$ satisfies Conditions (\ref{cond:homogeneity})--(\ref{cond:convexity}). Then for all $\varepsilon>0$ there is a constant $C_\varepsilon<\infty$ such that
\bann
G(x,t) \leq{}&\varepsilon F(x,t) +C_\varepsilon\quad \mbox{for all}\quad (x,t)\in M\times [0,T)\,,
\eann
where $G$ is given by a smooth, non-negative, degree one homogeneous function of the principal curvatures of the evolving hypersurface that vanishes at a point $(x,t)$ if and only if $\WW_{(x,t)}\geq 0$.
\end{thm}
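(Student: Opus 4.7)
The plan is to adapt the Stampacchia iteration argument of Huisken--Sinestrari \cite{HuSi99a, HuSi99b} to the present fully nonlinear setting, replacing the role played by pinching and explicit algebraic identities in the mean curvature flow case by the preserved cone of curvatures $\Gamma_0 \subset \Gamma$ and the resulting uniform parabolicity. For fixed $\varepsilon > 0$ and $\sigma > 0$ small, the natural test quantity is
\[
\Ges := (G - \varepsilon F)\, F^{\sigma - 1},
\]
and it suffices to bound $(\Ges)_+$ uniformly on $M \times [0,T)$. Such a bound translates, as $\sigma \to 0$ after a standard interpolation, into the desired inequality $G \leq \varepsilon F + C_\varepsilon$ on the region where $F$ is large; on the complementary compact set the inequality is automatic by continuity.

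The first step is to derive an evolution equation for $\Ges$. Using the standard formulas for how degree-one homogeneous functions of $\WW$ evolve under \eqref{eq:CF}, together with the Codazzi and Simons identities, one arrives at an inequality of schematic form
\[
(\pd_t - \eL)\Ges \leq \tfrac{2(1-\sigma)}{F}\, \dot{F}^{kl} \cd_k F \, \cd_l \Ges + Q_{\varepsilon,\sigma} + R_{\sigma},
\]
where $\eL = \dot{F}^{kl} \cd_k \cd_l$, the term $Q_{\varepsilon,\sigma}$ collects curvature-quadratic reaction contributions, and $R_\sigma$ collects second-derivative quadratic terms arising from the Hessians $\ddot{F}$ and $\ddot{G}$. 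The crux is Condition \eqref{cond:convexity}: combined with the fact that $G$ vanishes precisely on the convex cone $\{\WW \geq 0\}$ and is essentially convex there, it yields that $R_\sigma$ is pointwise non-positive up to contributions absorbable into the gradient, while $Q_{\varepsilon,\sigma}$ is controlled by a small multiple of $F\, \Ges$.

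For the iteration proper, multiply through by $p(\Ges)_+^{p-1}$ and integrate over the evolving hypersurface $M_t$. After integration by parts the gradient cross-term combines with part of the diffusion to produce a term $c \int_{M_t} |\cd \Ges_+^{p/2}|^2$ on the left, while the reaction contributes at most $Cp \int_{M_t} \Ges_+^p F^{1-\sigma}$ on the right. Invoking the Michael--Simon Sobolev inequality on $M_t$, absorbing the $F^{1-\sigma}$ factor using the uniform bounds on degree-zero functions of curvature from Lemma \ref{lem:homogeneousbounds}, and iterating over powers $p_k = 2^k p_0$ in the standard Stampacchia fashion then yields a uniform $L^\infty$ bound on $(\Ges)_+$, provided an initial $L^{p_0}$ estimate for some $p_0 > n$ is in hand; the latter follows by taking $\sigma$ sufficiently small depending on $\varepsilon$.

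The main obstacle is the second step, namely producing the favourable sign in the decomposition above. Unlike mean curvature flow, where one works directly with the smallest eigenvalue and exploits explicit formulas for its derivatives at simple eigenvalues, the abstract functions $F$ and $G$ require a simultaneous use of both Hessians $\ddot{F}$ and $\ddot{G}$ and a careful rearrangement (via Simons) to split $R_\sigma$ into a genuinely non-positive piece and remainder terms that are of lower weight in the curvature and can therefore be absorbed by the gradient contribution coming from the uniform parabolicity of Corollary \ref{cor:uniformparabolicity}. Once this pointwise algebraic inequality is established, the remaining steps are largely routine adaptations of the Huisken--Sinestrari scheme.
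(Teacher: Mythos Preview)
Note first that the present paper does not itself prove Theorem~\ref{thm:AC}; it is quoted from \cite{hypersurfaces}. The argument there follows the same template as the proof of Theorem~\ref{thm:CE} given in Sections~3--4 of this paper, so your proposal can be compared against that.

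Your overall strategy---work with $\Ges=(G/F-\varepsilon)F^\sigma$, derive an evolution inequality, obtain $L^p$ bounds, and run the Stampacchia--De Giorgi iteration---is correct, but you have the roles of the two problematic terms reversed, and this conceals a genuine gap. The Hessian term you call $R_\sigma$, namely $F^{\sigma-1}Q(\cd\WW,\cd\WW)$ with $Q=(\dot G^{kl}\ddot F^{pq,rs}-\dot F^{kl}\ddot G^{pq,rs})\cd_k\W_{pq}\cd_l\W_{rs}$, is \emph{not} handled by a Simons rearrangement. It is handled pointwise, by constructing $G$ carefully: one first builds a preliminary $g_1$ for which $Q\leq 0$ (cf.\ Lemma~\ref{lem:Qest} and the decomposition Lemma~\ref{lem:decomposition}), and then passes to $g=g_1^2/g_2$ with $g_2$ strictly concave in non-radial directions to upgrade this to the strict bound $Q\leq -c_\varepsilon|\cd\WW|^2/F$ on $\{G\geq\varepsilon F\}$ (Proposition~\ref{prop:Qest}). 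The assertion that $G$ is ``essentially convex'' on the positive cone, or that convexity of $f$ alone forces the sign, does not give this; the inequality comes from the specific algebraic structure built into $g$.

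Conversely, the reaction term $\sigma\Ges|\WW|^2_F$ is where the Simons identity actually enters, and it is \emph{not} lower order. Since $|\WW|^2_F$ is comparable to $F^2$ on the preserved cone, integration yields $C\sigma p\int E^p F^2\,d\mu$, not $Cp\int E^p F^{1-\sigma}\,d\mu$ as you write; Lemma~\ref{lem:homogeneousbounds} cannot absorb this because $F$ is degree one and blows up as $t\to T$. The correct mechanism is to contract the Simons-type commutation formula for $\eL\W_{ij}$ with $\dot G^{ij}$, extracting the zero-order quantity $Z(\WW)=(F\dot G^{kl}-G\dot F^{kl})\W^2_{kl}$, prove an estimate of the form $Z(\WW)/F\geq cF^2-C$ on $\{G>\varepsilon F\}$ (the analogue of Proposition~\ref{prop:Zest}), and integrate by parts to obtain a Poincar\'e-type inequality bounding $\int E^p|\WW|^2\,d\mu$ by gradient terms (cf.\ Lemma~\ref{lem:Lpestimate}). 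Only then does choosing $\sigma p^{1/2}$ small close the $L^p$ estimate and allow the iteration to proceed.
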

Theorem \ref{thm:AC} implies that the ratio of the smallest principal curvature to the speed is almost positive wherever the curvature is large. Combining it with the differential Harnack inequality of \cite{An94b} and the strong maximum principle \cite{Ha84} yields useful information about the geometry of solutions of \eqref{eq:CF} near singularities \cite{hypersurfaces} (cf. \cite{HuSi99a,HuSi99b}):
\begin{cor}\label{cor:AC}
Any blow-up limit of a solution of \eqref{eq:CF} is weakly convex. In particular, any type-II blow-up limit of a solution of \eqref{eq:CF} about a type-II singularity is a translation solution of \eqref{eq:CF} of the form $X_\infty:(\R^k\times\Gamma^{n-k})\times \R\to\R^{n+1}$, $k\in\{0,1,\dots,n-1\}$, such that $X_\infty|_{\Gamma^{n-k}}$ is a strictly convex translation solution of \eqref{eq:CF} in $\R^{n-k+1}$.
\end{cor}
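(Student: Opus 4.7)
The plan is to combine Theorem \ref{thm:AC} with the differential Harnack inequality of \cite{An94b} and Hamilton's strong tensor maximum principle \cite{Ha84}, following the template Huisken--Sinestrari use for mean curvature flow.

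For weak convexity of an arbitrary blow-up limit $X_\infty$, I would exploit the fact that $F$ and $G$ are both degree-one homogeneous in $\WW$. Given a blow-up sequence $(x_k,t_k)$ with $F(x_k,t_k)\to\infty$, parabolically rescale by $\lambda_k:=F(x_k,t_k)$ and translate $(x_k,t_k)$ to the spacetime origin. Dividing the estimate of Theorem \ref{thm:AC} through by $\lambda_k$ and passing to the smooth limit yields $G_\infty\leq\varepsilon F_\infty$ on $X_\infty$ for arbitrary $\varepsilon>0$; hence $G_\infty\equiv 0$, and by the defining property of $G$ the limit satisfies $\WW_\infty\geq 0$ pointwise.

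For the type-II assertion I would follow Hamilton's rescaling procedure: a type-II choice of $(x_k,t_k)$ produces, after the same parabolic rescaling, an eternal solution $X_\infty:M_\infty\times\R\to\R^{n+1}$ of \eqref{eq:CF} on which $F_\infty$ attains its spacetime supremum at the origin. Condition \eqref{cond:convexity} places us in the setting of \cite{An94b}, so the Harnack quantity $\pd_tF_\infty-(\dot F)^{ij}\cd_iF_\infty\cd_jF_\infty/F_\infty$ is non-negative on the eternal solution; combined with the spacetime maximum at the origin and the strong maximum principle, it vanishes identically, and the rigidity clause of \cite{An94b} identifies $X_\infty$ as a translation solution of \eqref{eq:CF}. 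Because $X_\infty$ is simultaneously weakly convex and translating, I would then apply Hamilton's strong maximum principle \cite{Ha84} to $\WW_\infty$, which under \eqref{eq:CF} satisfies a reaction-diffusion equation of the required Simons type. Its null distribution is parallel and of constant rank $k$, so the associated de~Rham-type splitting gives $X_\infty=\R^k\times\Gamma^{n-k}$ with $X_\infty|_{\Gamma^{n-k}}$ strictly convex; the normalisation $F_\infty(0,0)=1>0$ forces $k\leq n-1$.

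I expect the genuinely delicate point to be the equality-case analysis of the Harnack inequality in this fully non-linear setting, i.e.\ the identification of the eternal solution as a translator rather than some more general ancient solution. Everything else is either a direct rescaling of a degree-one homogeneous estimate, or a routine invocation of Hamilton's tensor maximum principle applied to a weakly positive Codazzi tensor obeying an admissible evolution equation.
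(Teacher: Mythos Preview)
Your proposal is correct and follows precisely the approach the paper indicates: the paper does not spell out a proof here but simply states that the corollary follows by combining Theorem~\ref{thm:AC} with the Harnack inequality of \cite{An94b} and Hamilton's strong maximum principle \cite{Ha84}, referring to \cite{hypersurfaces} for details. Your outline---rescaling the degree-one homogeneous estimate to force $G_\infty\equiv 0$, using the type-II point selection to obtain an eternal weakly convex limit attaining its curvature maximum, invoking the Harnack equality case to identify it as a translator, and then applying the strong maximum principle to split off the kernel of $\WW_\infty$---is exactly that argument.
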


Motivated by \cite[Section 5]{HuSi09}, we apply Theorem \ref{thm:AC} to obtain the following family of cylindrical estimates for solutions of \eqref{eq:CF}:

\begin{thm}[Cylindrical Estimate]\label{thm:CE}
Let $X$ be a solution of \eqref{eq:CF} such that Conditions (\ref{cond:homogeneity})--(\ref{cond:convexity}) hold. Suppose also that $X$ is $(m+1)$-convex for some $m\in\{0,1,\dots,n-2\}$. That is, $\kappa_1+\dots+\kappa_{m+1}\geq \beta F$ for some $\beta>0$. Then for all $\varepsilon>0$ there is a constant $C_\varepsilon>0$ such that
\bann
G_m(x,t) \leq{}&\varepsilon F(x,t) +C_\varepsilon\quad \mbox{for all}\quad (x,t)\in M\times [0,T)\,,
\eann
where $G_m:M\times [0,T)\to\R$ is given by a smooth, non-negative, degree one homogeneous function of the principal curvatures that vanishes at a point $(x,t)$ if and only if
\bann
\kappa_1(x,t)+\dots+\kappa_{m+1}(x,t)\geq {}&\frac{1}{c_m}f(\kappa_1(x,t),\dots,\kappa_n(x,t))\,,
\eann
where $c_m$ is the value $F$ takes on the unit radius cylinder, $\R^m\times \T^{n-m}$.
\end{thm}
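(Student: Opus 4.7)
The plan is to adapt the strategy used by Huisken and Sinestrari to prove the cylindrical estimate for mean curvature flow \cite[Section 5]{HuSi09} to the fully nonlinear setting, using the machinery developed for the convexity estimate in \cite{hypersurfaces}. The essential inputs are the convexity estimate (Theorem \ref{thm:AC}), the preserved cone $\Gamma_0$ and the resulting uniform parabolicity, the convexity of $F$, and the $(m+1)$-convexity hypothesis $K_m \geq \beta F$, where $K_m(\WW) := \kappa_1+\cdots+\kappa_{m+1}$ denotes the sum of the $m+1$ smallest principal curvatures.

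First I would fix a concrete representative of $G_m$. A natural candidate is a smoothed version of $(F-c_m K_m)_+$; since $K_m(\WW)=\min_P\tr(P\WW)$ with $P$ ranging over the rank-$(m+1)$ orthogonal projections of $T_xM$, this quantity is degree one homogeneous, concave in $\WW$, and Lipschitz. The normalisation $c_m=F(0,\ldots,0,1,\ldots,1)$ ensures that $G_m$ vanishes precisely when $c_m K_m \geq F$, as required. Because $K_m$ is not smooth at eigenvalue collisions, one makes the subsequent PDE computations rigorous by first working with a smooth approximation and then passing to the limit.

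The heart of the proof is a Stampacchia-type iteration applied to the degree-$\sigma$ homogeneous quantity $f_{\sigma,\eta} := (G_m-\eta F)_+\, F^{\sigma-1}$ for small parameters $\sigma,\eta>0$. The first task is to derive an evolution inequality of roughly the form
\begin{align*}
\pd_t f_{\sigma,\eta} - \dot F^{ij}\cd_i\cd_j f_{\sigma,\eta} \leq -\frac{c(1-\sigma)}{F}\dot F^{ij}\cd_i F\,\cd_j f_{\sigma,\eta} + (\text{error terms}),
\end{align*}
in which the concavity of $G_m$, the convexity of $F$, and Codazzi/Simons-type identities together produce the indicated gradient term, and in which Theorem \ref{thm:AC} combined with the $(m+1)$-convexity hypothesis allows the remaining zero-order errors to be controlled. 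Multiplying by $f_{\sigma,\eta}^{p-1}$, integrating over $M_t$ and applying the Michael--Simon Sobolev inequality then yields Moser/Stampacchia iteration estimates that close to an $L^\infty$ bound $\sup f_{\sigma,\eta}\leq C(\sigma,\eta)$. This translates to $G_m\leq \eta F + C(\sigma,\eta)F^{1-\sigma}$, and Young's inequality on the second term converts it into $\tfrac{\varepsilon}{2}F + C_\varepsilon$; choosing $\eta=\varepsilon/2$ completes the proof.

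The main obstacle is the derivation of the evolution inequality for $f_{\sigma,\eta}$ with a sufficiently favourable reaction-diffusion structure. The nonlinearity of $F$ produces several zero-order terms whose signs are not immediately clear, and these must be handled by combining the convexity of $f$, uniform parabolicity, and the $(m+1)$-convexity hypothesis, following the template of the convexity estimate proof in \cite{hypersurfaces}. A secondary technical complication is the non-smoothness of $K_m$ at eigenvalue collisions, which is handled by standard mollification.
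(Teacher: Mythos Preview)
Your overall architecture---define a pinching quantity $G_m$ vanishing on $\overline\Gamma_m$, form $G_{\varepsilon,\sigma}=(G_m/F-\varepsilon)F^\sigma$, derive $L^p$ bounds, and finish by Stampacchia--De~Giorgi iteration---matches the paper. But the proposal glosses over the two ingredients that carry the actual weight, and one of your stated reasons is incorrect.

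First, your candidate $G_m\approx (F-c_mK_m)_+$ is \emph{convex}, not concave: $K_m=\min_P\tr(P\WW)$ is concave, so $F-c_mK_m$ is convex. What this buys you (via the formula $Q=c_m\bigl(\dot F^{kl}\ddot K_m^{pq,rs}-\dot K_m^{kl}\ddot F^{pq,rs}\bigr)T_{kpq}T_{lrs}$) is only the \emph{non-strict} inequality $Q\le 0$. The iteration, however, needs the strict estimate $Q\le -c_\varepsilon|T|^2/F$ wherever $G_m\ge\varepsilon F$, to absorb the cross terms arising from integration by parts. ``Standard mollification'' does not produce this strictness. The paper handles this in two stages: a smooth preliminary function $g_1(z)=f(z)\sum_{\sigma}\varphi\bigl((\sum_{i\le m+1}z_{\sigma(i)}-c_m^{-1}f)/f\bigr)$ built from a $C^2$ convex cutoff $\varphi$ (for which $Q\le 0$ is proved via a delicate three-term decomposition of $Q$, Lemma~\ref{lem:decomposition}, not by any global concavity), and then the modification $g=g_1^2/g_2$ with $g_2=MH-|\WW|$, whose strict concavity in non-radial directions upgrades $Q\le 0$ to the required strict bound (Proposition~\ref{prop:Qest}).

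Second, you do not identify the mechanism by which the convexity estimate enters. The bad zero-order term $\sigma\int E^p|\WW|^2$ is not controlled directly; it is traded, via the contracted Simons identity, for the quantity $Z(\WW)=(F\dot G^{kl}-G\dot F^{kl})\W^2_{kl}$, and one must prove a lower bound $Z\ge \gamma_\varepsilon F^2(G-\delta F)-\gamma_\delta F^2$ on the set $\{\kappa_1\ge -\delta F-C_\delta\}$ furnished by Theorem~\ref{thm:AC} (Proposition~\ref{prop:Zest}). This, together with a Poincar\'e-type inequality obtained from the Simons identity (Lemma~\ref{lem:Lpestimate}), is what closes the $L^p$ estimate. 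Your sketch names the convexity estimate but not this $Z$-estimate or the Simons-identity route; without them the ``remaining zero-order errors'' cannot be absorbed.
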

Theorem \ref{thm:CE} implies that the ratio of the quantity 
$$
K_m:=\kappa_1+\dots+\kappa_{m+1}-\frac{1}{c_m}F
$$ 
to the speed is almost positive wherever the curvature is large. Observe that this quantity is non-negative on a weakly convex hypersurface $\Sigma$ only if either $\Sigma$ is strictly $m-$convex, or $\Sigma=\R^m\times S^{n-m}$. In particular, we find that whenever $\kappa_1(x,t)+\dots+\kappa_{m}(x,t)$ is small compared to the speed, the Weingarten curvature is close to that of a thin cylinder $\R^m\times S^{n-m}$. We obtain the following refinement of Corollary \ref{cor:AC}:

\begin{cor}\label{cor:corollary1}
Any blow-up limit of an $(m+1)$-convex, $0\leq m\leq n-2$, solution of \eqref{eq:CF} is either strictly $m$-convex, or a shrinking cylinder $\R^m\times S^{n-m}$. In particular, if the blow-up is of type-II, then this limit is a translation solution of \eqref{eq:CF} of the form $X_\infty:\lb\R^k\times\Gamma^{n-k}\rb\times \R\to\R^{n+1}$ for $k\in\{0,1,\dots,m-1\}$, such that $X_\infty|_{\Gamma^{n-k}}$ is a strictly convex translation solution of \eqref{eq:CF} in $\R^{n-k+1}$.
\end{cor}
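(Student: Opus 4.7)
The plan is to obtain both parts of the corollary by combining Theorem \ref{thm:CE} with Corollary \ref{cor:AC} via a standard parabolic blow-up, producing a pointwise pinching on any blow-up limit that forces either strict $m$-convexity, or an isometric splitting off of an $\R^m$ factor whose orthogonal complement is then identified as a round sphere.

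First I would rescale the flow parabolically along a blow-up sequence $(p_i,t_i)$ with $\lambda_i:=F(p_i,t_i)\to\infty$. The degree-one homogeneity of both $G_m$ and $F$ converts the estimate of Theorem \ref{thm:CE} on the rescaled solutions into $G_m\leq \varepsilon F+C_\varepsilon/\lambda_i$; passing to the limit with $\varepsilon$ arbitrary and using $G_m\geq 0$, the blow-up $X_\infty$ satisfies $G_m\equiv 0$, i.e.
\bann
\kappa_1+\dots+\kappa_{m+1}\geq \tfrac{1}{c_m}F
\eann
pointwise, while Corollary \ref{cor:AC} supplies weak convexity of $X_\infty$.

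If $X_\infty$ fails to be strictly $m$-convex, weak convexity forces $\kappa_1=\dots=\kappa_m=0$ at some point, so $\dim\ker\WW\geq m$ there. I would then apply Hamilton's strong maximum principle for the Weingarten map, suitably adapted to flows of the form \eqref{eq:CF}, to conclude that $\ker\WW$ is a parallel distribution of constant dimension $k$ on $X_\infty$ and that $X_\infty$ splits isometrically as $\R^k\times\Sigma^{n-k}$ with $\Sigma^{n-k}$ strictly convex. Scale-invariance of $(m+1)$-convexity passes to $X_\infty$ and forces $k\leq m$, while the existence of a non-strictly-$m$-convex point forces $k\geq m$, so $k=m$. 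Evaluating the pointwise pinching on this splitting yields $\kappa_1^\Sigma\geq F(0,\dots,0,\kappa_1^\Sigma,\dots,\kappa_{n-m}^\Sigma)/c_m$; homogeneity and the strict monotonicity of $f$, combined with $c_m=f(0,\dots,0,1,\dots,1)$, force $\Sigma^{n-m}$ to be umbilic at each point, and hence (via Codazzi--Mainardi, using $n-m\geq 2$) a round sphere. Thus $X_\infty=\R^m\times S^{n-m}$, a shrinking cylinder.

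The type-II refinement then follows quickly: a shrinking cylinder is ancient but not eternal, so the cylindrical alternative is incompatible with a type-II blow-up and $X_\infty$ must instead be strictly $m$-convex. The structural assertion of Corollary \ref{cor:AC} supplies the splitting $X_\infty=(\R^k\times\Gamma^{n-k})\times\R$ with $\Gamma^{n-k}$ a strictly convex translation solution of \eqref{eq:CF}, and strict $m$-convexity excludes $k\geq m$, leaving $k\in\{0,\dots,m-1\}$. The main technical obstacle I foresee is executing the strong maximum principle argument for $\WW$ in the fully nonlinear setting and extracting a \emph{global} parallel splitting of the (possibly non-compact) blow-up; the remaining steps are a clean rescaling argument together with a direct appeal to Corollary \ref{cor:AC}.
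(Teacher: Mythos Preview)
Your proposal is correct and follows the same approach as the paper, which does not supply a formal proof of the corollary but rather the one-line observation preceding its statement (``this quantity is non-negative on a weakly convex hypersurface $\Sigma$ only if either $\Sigma$ is strictly $m$-convex, or $\Sigma=\R^m\times S^{n-m}$'') together with an appeal to Corollary~\ref{cor:AC}. Your argument expands precisely this sketch: the rescaling passes $G_m\equiv 0$ to the limit, Corollary~\ref{cor:AC} (whose proof in \cite{hypersurfaces} already contains the Hamilton strong-maximum-principle splitting you invoke) supplies weak convexity and the $\R^k\times\Gamma^{n-k}$ structure, and monotonicity of $f$ together with the normalisation $c_m=f(0,\dots,0,1,\dots,1)$ forces the cross-section to be round.
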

Huisken-Sinestrari obtained Theorem \ref{thm:CE} for the mean curvature flow in the case $m=1$, making spectacular use of it through their surgery program \cite{HuSi09}, which yielded a classification of 2-convex hypersurfaces.

Moreover, the $m=0$ case produces an analogue of Huisken's curvature estimate for convex solutions of the mean curvature flow \cite[Theorem 5.1]{Hu84}. 
This estimate implies that a convex solution of \eqref{eq:CF} becomes round at points of large curvature, which is crucial in proving that solutions contract to round points. This result was proved by different means for the class of flows considered here \cite{An94}.



\section{Preliminaries}\label{sec:prelims}

We will follow the notation used in \cite{hypersurfaces}. In particular, we recall that a smooth, symmetric function $g$ of the principal curvatures gives rise to a smooth function $G$ of the components of the Weingarten map. Equivalently, $G$ is an orthonormal frame invariant function of the components $\W_{ij}$ of the second fundamental form. To simplify notation, we denote $G(x,t)\equiv G\lb\W(x,t)\rb = g(\kappa(x,t))$ and use dots to denote derivatives of functions of curvature as follows:
\bann
\dot g^k(z)v_k=\left.\frac{d}{ds}\right|_{s=0}g(z+sv)\quad &\quad \dot G^{kl}(A)B_{kl}=\left.\frac{d}{ds}\right|_{s=0}G(A+sB)\\
\ddot g^{pq}(z)v_pv_q=\left.\frac{d^2}{ds^2}\right|_{s=0}g(z+sv)\quad &\quad \ddot G^{pq,rs}(A)B_{pq}B_{rs}=\left.\frac{d^2}{ds^2}\right|_{s=0}G(A+sB)\,.
\eann

The derivatives of $g$ and $G$ are related in the following way (cf. \cite{Ge90,An94,An07}):

\begin{lem}\label{lem:Fderivatives}
Let $g:\Gamma\to\R$ be a smooth, symmetric function. Define the function $G:\mathcal{S}_\Gamma:\to\R$ by $G(A):=g(\lambda(A))$, where $\lambda(A)$ denotes the eigenvalues of $A$ (up to order). Then for any diagonal $A$ with eigenvalues in $\Gamma$ we have 
\ba
\label{eq:DF} \dot G^{kl}(A) ={}&\dot g^k(\lambda(A))\delta^{kl}\,,
\ea 
and for any diagonal $A$ with distinct eigenvalues lying in $\Gamma$, and any symmetric $B\in GL(n)$, we have
\ba \label{E:D2F} \ddot G^{pq,rs}(A)B_{pq}B_{rs} ={}&\ddot
g^{pq}(\lambda(A))B_{pp}B_{qq}+2\sum_{p>q}\frac{\dot
g^p(\lambda(A))-\dot
g^q(\lambda(A))}{\lambda_p(A)-\lambda_q(A)}\big(B_{pq}\big)^2\,.
\ea
\end{lem}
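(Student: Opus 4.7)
The plan is to reduce the lemma to first- and second-order perturbation theory for symmetric matrices, combined with the chain rule. Since $G$ is basis invariant (by symmetry of $g$), it suffices to fix a diagonal $A$ with eigenvalues $\lambda=(\lambda_1,\dots,\lambda_n)$ and a symmetric perturbation $B$, then analyse $s\mapsto G(A+sB)=g(\lambda(s))$, where $\lambda(s)$ denotes a smooth local choice of eigenvalues of $A+sB$.

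For the first derivative formula, first suppose $A$ has distinct eigenvalues. Standard first-order perturbation theory produces smooth branches $\lambda_k(s)$ with $\lambda_k(0)=\lambda_k$ and $\lambda_k'(0)=B_{kk}$. The chain rule then gives
\[
\dot G^{kl}(A)B_{kl}=\sum_k \dot g^k(\lambda)B_{kk},
\]
which is equivalent to \eqref{eq:DF}. The case in which $A$ has repeated eigenvalues is handled by continuity: symmetry of $g$ forces $\dot g^k$ to coincide at repeated arguments, so the right-hand side depends continuously on $\lambda$, and both sides of \eqref{eq:DF} extend by density.

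For the second derivative formula, assume $A$ has distinct eigenvalues. Rayleigh--Schr\"odinger perturbation theory yields
\[
\lambda_k(s)=\lambda_k+sB_{kk}+s^2\sum_{p\neq k}\frac{(B_{kp})^2}{\lambda_k-\lambda_p}+O(s^3),
\]
where non-degeneracy ensures the denominators are non-zero. Expanding $g(\lambda(s))$ to second order via Taylor's theorem and collecting $s^2$ coefficients (using Step 1 for the $\dot g^k \lambda_k''(0)$ contribution and the first-order expansion $\lambda_k'(0)=B_{kk}$ for the $\ddot g^{pq}\lambda_p'(0)\lambda_q'(0)$ contribution) gives
\[
\ddot G^{pq,rs}(A)B_{pq}B_{rs}=\ddot g^{pq}(\lambda)B_{pp}B_{qq}+2\sum_k \dot g^k(\lambda)\sum_{p\neq k}\frac{(B_{kp})^2}{\lambda_k-\lambda_p}.
\]
In the last term, pair the $(k,p)$ contribution with the $(p,k)$ contribution and restrict the outer sum to $p>q$ to obtain the desired $2\sum_{p>q}\frac{\dot g^p(\lambda)-\dot g^q(\lambda)}{\lambda_p-\lambda_q}(B_{pq})^2$.

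The main technical obstacle is the careful bookkeeping of the Rayleigh--Schr\"odinger expansion for eigenvalues of a perturbed symmetric matrix. The non-degeneracy hypothesis is essential to keep the denominators in the second-order expansion (and in the final formula) well-defined, and the symmetrisation step requires some attention so that the resulting expression is manifestly symmetric in the index pair $(p,q)$. Everything else is a routine application of Taylor's theorem and the chain rule.
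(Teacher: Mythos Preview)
Your argument is correct and is essentially the standard derivation via first- and second-order perturbation theory for eigenvalues of symmetric matrices, followed by the chain rule and the pairing/symmetrisation step you describe. The paper does not supply its own proof of this lemma; it simply states the result and refers the reader to \cite{Ge90,An94,An07}, where precisely this kind of computation is carried out. So there is nothing to compare beyond noting that your write-up matches the approach in those references.

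One small remark on your treatment of the first identity at a diagonal $A$ with repeated eigenvalues: rather than arguing by density from the distinct-eigenvalue case, it is slightly cleaner to observe directly that $G$ is smooth (this is Glaeser's theorem, used elsewhere in the paper) and that for a diagonal perturbation $B$ one has $G(A+sB)=g(\lambda+s\,\mathrm{diag}(B))$ exactly, which gives $\dot G^{kk}(A)=\dot g^k(\lambda)$ immediately; the vanishing of the off-diagonal components $\dot G^{kl}(A)$ for $k\neq l$ then follows from the invariance $G(OAO^{\top})=G(A)$ differentiated along the one-parameter rotation in the $(k,l)$-plane. Your continuity argument also works, of course, once one knows $\dot G$ is continuous.
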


In particular, in an orthonormal frame of eigenvectors of $\WW$ we have
\bann
\dot G^{kl}=&\dot g^k\delta^{kl}\\
\ddot G^{pq,rs}B_{pq}B_{rs} ={}&\ddot g^{pq}B_{pp}B_{qq}+2\sum_{p>q}\frac{\dot g^p-\dot g^q}{\kappa_p-\kappa_q}\big(B_{pq}\big)^2\,,
\eann
where we are denoting $\dot G\equiv \dot G\circ \W$, etc.

We note that $\ddot g\geq 0$ if and only if $(\dot g^p-\dot g^q)(z_p-z_q)\geq 0$ \cite[Lemma 2.2]{hypersurfaces}, so Lemma \ref{lem:Fderivatives} implies that $G$ is convex if and only if $g$ is convex.

\begin{lem}\label{lem:homogeneousbounds}
Let $X:M\times [0,T)\to\R^{n+1}$ be a solution of \eqref{eq:CF} such that $f$ satisfies Conditions (\ref{cond:homogeneity})--(\ref{cond:convexity}). Let $g:\Gamma\to\R$ be a smooth, degree zero homogeneous symmetric function. Then there exists $c>0$ such that
\bann
-c\leq g\lb \kappa_1(x,t),\dots,\kappa_n(x,t)\rb\leq c\,.
\eann
for all $(x,t)\in M\times [0,T)$. 

If $g>0$, then there exists $c>0$ such that
\bann
\frac{1}{c}\leq g\lb \kappa_1(x,t),\dots,\kappa_n(x,t)\rb\leq c\,.
\eann
\end{lem}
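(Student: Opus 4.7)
The plan is to obtain the bounds by reducing to a compact subset of $\Gamma$ on which $g$ is continuous, using the existence of a preserved curvature cone for the flow together with the degree-zero homogeneity of $g$.

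First, I would invoke the preserved cone mentioned in the paragraph following Conditions (\ref{cond:homogeneity})--(\ref{cond:convexity}), namely Lemma 2.4 of \cite{hypersurfaces}, to obtain a convex cone $\Gamma_0\subset\Gamma$ preserved by the flow, in the sense that $\kappa(x,t):=(\kappa_1(x,t),\dots,\kappa_n(x,t))\in\Gamma_0$ for every $(x,t)\in M\times[0,T)$, and whose closure satisfies $\overline{\Gamma_0}\setminus\{0\}\subset\Gamma$. In other words, as the flow evolves the curvature $n$-tuple cannot escape from a closed subcone of $\Gamma$ that stays uniformly away from $\pd\Gamma$ (away from the origin).

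Next, since $g$ is homogeneous of degree zero, for every nonzero $\kappa\in\Gamma$ we have $g(\kappa)=g(\kappa/|\kappa|)$, so the values of $g\circ\kappa$ along the flow are realised on the set $K:=\overline{\Gamma_0}\cap S^{n-1}$. By the preceding step, $K$ is a closed and bounded subset of $\Gamma$, hence compact, and $g$ is continuous (indeed smooth) on $\Gamma$. Therefore $g(K)\subset\R$ is compact, so setting $c:=\max_{K}|g|<\infty$ gives the two-sided estimate $-c\leq g(\kappa(x,t))\leq c$ for all $(x,t)\in M\times[0,T)$.

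Finally, if $g>0$ on $\Gamma$, then the minimum $\min_{K}g$ is attained at some point of $K\subset\Gamma$ and is strictly positive, so redefining $c:=\max\{\max_{K}g,\,1/\min_{K}g\}$ yields $1/c\leq g(\kappa(x,t))\leq c$. The only step requiring genuine work is the existence of the preserved cone $\Gamma_0$ with $\overline{\Gamma_0}\setminus\{0\}\subset\Gamma$; since this is quoted from \cite{hypersurfaces}, there is no real obstacle, and homogeneity together with compactness does the rest.
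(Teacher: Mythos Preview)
Your proof is correct and follows essentially the same approach as the paper: use the preserved cone $\Gamma_0$, intersect its closure with the unit sphere to get a compact set $K\subset\Gamma$, and apply continuity of $g$ together with degree-zero homogeneity. Your write-up is in fact slightly more careful in making explicit that $\overline{\Gamma_0}\setminus\{0\}\subset\Gamma$ (so that $g$ is defined on $K$), a point the paper leaves implicit.
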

\begin{proof}
Let $\Gamma_0$ be a preserved cone for the solution $X$. Then $K:=\overline \Gamma_0\cap S^n$ is compact. Since $g$ is continuous, the required bounds hold on $K$. But these extend to $\overline \Gamma_0\setminus\{0\}$ by homogeneity. The claim follows since $\kappa(x,t)\in \overline \Gamma_0\setminus\{0\}$ for all $(x,t)\in M\times [0,T)$.
\end{proof}

By Condition (\ref{cond:homogeneity}), the derivative $\dot f$ of $f$ is homogeneous of degree zero. Since $\dot f^k>0$ for each $k$, we obtain uniform parabolicity of the flow:

\begin{cor}\label{cor:uniformparabolicity}
There exists a constant $c>0$ such that for any $v\in T^\ast M$ it holds that
\bann
\frac{1}{c}|v|^2 \leq \dot F^{ij}v_iv_j \leq c|v|^2\,,
\eann
where $| \cdot |$ is the (time-dependent) norm on $M$ corresponding to the (time-dependent) metric induced by the flow.
\end{cor}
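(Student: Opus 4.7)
The plan is to reduce the bilinear estimate on $\dot F$ to pointwise bounds on the eigenvalue derivatives $\dot f^i$, and then to extract those bounds from Lemma~\ref{lem:homogeneousbounds} via an appropriate choice of smooth symmetric homogeneous functions of the principal curvatures.

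First I would work at an arbitrary point $(x,t)$ and diagonalise $\WW(x,t)$ by choosing an orthonormal frame of eigenvectors. By \eqref{eq:DF} in Lemma~\ref{lem:Fderivatives} we then have $\dot F^{ij}=\dot f^i\,\delta^{ij}$ in this frame, so $\dot F^{ij}v_iv_j=\sum_i\dot f^i\,v_i^2$, and since $|v|^2=\sum_iv_i^2$ in the chosen orthonormal coframe, the desired estimate is equivalent to
\bann
\frac{1}{c}\leq \dot f^i(\kappa(x,t))\leq c\qquad\text{for every }i\text{ and every }(x,t)\in M\times[0,T).
\eann
Thus the problem becomes a uniform bound (above and below, away from zero) on the partial derivatives of $f$ along the flow.

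Next I would produce these bounds by applying Lemma~\ref{lem:homogeneousbounds} to judiciously chosen symmetric combinations of the $\dot f^i$. Since $f$ is degree one homogeneous, each $\dot f^i$ is degree zero homogeneous; and although the individual $\dot f^i$ are not symmetric functions of $\kappa$, symmetry of $f$ implies that any symmetric polynomial in $\dot f^1,\dots,\dot f^n$ is a smooth symmetric function of $\kappa$. For the upper bound I would take $g(\kappa):=\sum_i(\dot f^i(\kappa))^2$, which is smooth, symmetric, and degree zero homogeneous, so Lemma~\ref{lem:homogeneousbounds} furnishes $c_1<\infty$ with $g\leq c_1$; in particular $\dot f^i\leq\sqrt{c_1}$ for each $i$. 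For the lower bound I would take $h(\kappa):=\prod_i\dot f^i(\kappa)$, which is smooth, symmetric, degree zero homogeneous, and strictly positive on $\Gamma$ by the monotonicity hypothesis $\dot f^i>0$. The positive case of Lemma~\ref{lem:homogeneousbounds} then gives a constant $c_2>0$ with $h\geq 1/c_2$, and combined with the upper bound this yields $\dot f^i\geq (c_2\, c_1^{(n-1)/2})^{-1}$ uniformly.

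Putting the two inequalities together produces a single constant $c$ that sandwiches $\dot F^{ij}v_iv_j$ between $c^{-1}|v|^2$ and $c|v|^2$, which is exactly the claim. The only subtlety to check is that $h$ is genuinely bounded below by a positive constant on the preserved cone, i.e.\ that $\overline{\Gamma_0}\setminus\{0\}\subset\Gamma$ so that strict monotonicity $\dot f^i>0$ persists up to the boundary of $\Gamma_0$; this is part of the content of the preserved cone construction in \cite{hypersurfaces} cited just above, so no additional work is required.
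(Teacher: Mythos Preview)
Your argument is correct and follows the paper's approach: the paper simply notes before the corollary that $\dot f^k$ is degree zero homogeneous and strictly positive, so Lemma~\ref{lem:homogeneousbounds} (whose proof uses only continuity and compactness of $\overline{\Gamma_0}\cap S^n$) yields the required two-sided bounds. Your use of the smooth symmetric combinations $\sum_i(\dot f^i)^2$ and $\prod_i\dot f^i$ is a neat way to invoke the lemma exactly as stated (i.e.\ for smooth symmetric $g$), whereas the most direct reading of the paper's hint would apply the compactness argument to the merely continuous symmetric functions $\max_k\dot f^k$ and $\min_k\dot f^k$; either route is fine and the content is the same.
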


We now recall the following evolution equation (see for example \cite{AnMcZh}):

\begin{lem}
Let $X:M\times[0,T)\to \R^{n+1}$ be a solution of \eqref{eq:CF} such that $f$ satisfies Conditions (\ref{cond:homogeneity})--(\ref{cond:convexity}). Let $G:M\times [0,T)\to \R$ be given by a smooth, symmetric, degree one homogeneous function $g$ of the principal curvatures. Then $G$ satisfies the following evolution equation:
\ba\label{eq:evolvG}
(\pd_t-\eL)G= (\dot G^{kl}\ddot F^{pq,rs}-\dot F^{kl}\ddot G^{pq,rs})\cd\W_{pq}\cd\W_{rs}+G|\WW|^2_F
\ea
where $\eL:=\dot F^{kl}\cd_k\cd_l$ is the linearisation of $F$, and $|\WW|^2_F:= \dot F^{kl}{\W_k}^r\W_{rl}$.
\end{lem}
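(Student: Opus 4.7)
The plan is to derive this evolution equation directly by differentiating $G$ in time and space and applying the standard Simons-type identity. Recall that under the flow \eqref{eq:CF} the second fundamental form satisfies
\bann
\pd_t \W_{ij} = \cd_i\cd_j F + F\,{\W_i}^k\W_{kj}.
\eann
Since $G$ is a smooth function of $\W$, the chain rule gives $\pd_t G = \dot G^{ij}\pd_t \W_{ij}$, while the spatial Laplacian in the flow's principal symbol satisfies
\bann
\eL G = \dot F^{kl}\cd_k\cd_l G = \dot F^{kl}\dot G^{ij}\cd_k\cd_l \W_{ij} + \dot F^{kl}\ddot G^{ij,pq}\cd_k \W_{ij}\,\cd_l \W_{pq}.
\eann
Expanding $\cd_i\cd_j F$ analogously yields a $\ddot F$-term contracted with $\dot G^{ij}$ which, after subtracting the $\ddot G$-term above, produces the displayed gradient term $(\dot G^{kl}\ddot F^{pq,rs}-\dot F^{kl}\ddot G^{pq,rs})\cd\W_{pq}\cd\W_{rs}$. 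The remaining task is to control the fourth-order piece $\dot G^{ij}\dot F^{kl}(\cd_i\cd_j\W_{kl}-\cd_k\cd_l\W_{ij})$ together with the zero-order reaction $F\dot G^{ij}{\W_i}^k\W_{kj}$ coming from the evolution of $\W_{ij}$.

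The key identity to invoke is the symmetrisation of covariant derivatives of $\W$: by Codazzi together with the commutator formula and the Gauss equation $R_{ijkl}=\W_{ik}\W_{jl}-\W_{il}\W_{jk}$, one gets
\bann
\cd_i\cd_j \W_{kl} - \cd_k\cd_l \W_{ij} = \W_{ab}\text{-quadratic terms in }\W.
\eann
Contracting this with the symmetric tensor $\dot F^{kl}\dot G^{ij}$ collapses the quadratic correction using the two Euler identities
\bann
\dot F^{kl}\W_{kl} = F, \qquad \dot G^{ij}\W_{ij} = G,
\eann
which follow from degree-one homogeneity of $f$ and $g$. The outcome is a cancellation between $F\dot G^{ij}{\W_i}^k\W_{kj}$ and one of the Simons terms, leaving precisely the reaction term $G\,\dot F^{kl}{\W_k}^r\W_{rl} = G|\WW|^2_F$.

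Putting the pieces together produces \eqref{eq:evolvG}. The only genuine obstacle is the careful bookkeeping in the Simons computation: one must track which indices are symmetrised by $\dot F$ and $\dot G$ and verify that all quadratic-in-$\W$ terms either cancel or combine via the Euler identities into the single clean expression $G|\WW|^2_F$. Since every step is an algebraic consequence of the flow equation, the chain rule and the Codazzi/Gauss identities, no further input is needed.
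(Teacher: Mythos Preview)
Your proposal is correct and follows the standard derivation. Note that the paper does not actually prove this lemma: it simply states the evolution equation and refers to \cite{AnMcZh} for the computation. The route you outline---chain rule for $\partial_t G$ and $\eL G$, expansion of $\cd_i\cd_j F$, the Simons-type commutation $\cd_i\cd_j\W_{kl}-\cd_k\cd_l\W_{ij}$ via Codazzi and the Gauss equation, and collapse of the resulting quadratic-in-$\W$ terms using the Euler identities $\dot F^{kl}\W_{kl}=F$ and $\dot G^{ij}\W_{ij}=G$---is precisely the standard argument one finds in that reference and in \cite{An94,hypersurfaces}. One small point worth making explicit in the bookkeeping: after contracting the Simons commutator with $\dot G^{ij}\dot F^{kl}$, the cross terms $\dot G^{ij}\dot F^{kl}\W_{il}\W^2_{kj}$ and $-\dot G^{ij}\dot F^{kl}\W^2_{il}\W_{kj}$ cancel identically (diagonalise $\W$ to see this), and the two surviving terms are $G|\WW|^2_F$ and $-F\dot G^{ij}\W^2_{ij}$, the latter cancelling exactly against the reaction term $F\dot G^{ij}\W^2_{ij}$ coming from $\partial_t\W$. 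That is the ``cancellation'' you allude to, and once it is written down nothing further is needed.
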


In particular, the speed function $F$ satisfies
\bann
(\pd_t-\eL)F=F|\WW|^2_F\,.
\eann

As we shall see, in order to obtain Theorem \ref{thm:CE}, it is crucial to obtain a good upper bound on the term
\bann
Q(\cd\WW,\cd\WW):=(\dot G^{kl}\ddot F^{pq,rs}-\dot F^{kl}\ddot G^{pq,rs})\cd_k\W_{pq}\cd_l\W_{rs}
\eann
for the pinching functions $G_m$ which we construct in the following section. The following decomposition of $Q$ is crucial in obtaining this bound.

\begin{lem}\label{lem:decomposition}
For any totally symmetric $T\in \R^n\otimes\R^n\otimes\R^n$, we have
\ba
(\dot G^{kl}\ddot F^{pq,rs}-\dot F^{kl}\ddot G^{pq,rs})\big|_{B}T_{kpq}T_{lrs} ={}&(\dot g^{k}\ddot f^{pq}-\dot f^{k}\ddot g^{pq})\big|_{z}T_{kpp}T_{kqq}\nonumber\\
&+2\sum_{p>q}\frac{(\dot f^p\dot g^q-\dot g^p\dot f^q)\big|_{z}}{z_p-z_q}\Big((T_{pqq})^2+(T_{qpp})^2\Big)\nonumber\\
&+2\sum_{k>p>q}(\vec g_{kpq}\times \vec f_{kpq})\big|_{z}\cdot \vec z_{kpq}(T_{kpq})^2\label{eq:decomp}
\ea
at any diagonal matrix $B$ with distinct eigenvalues $z_i$, where `$\times$' and `$\,\cdot$' are the three dimensional cross and dot product respectively, and we have defined the vectors
\bann
\vec f_{kpq}:=(\dot f^k,\dot f^p,\dot f^q)\,,\quad &\vec 	g_{kpq}:=(\dot g^k,\dot g^p,\dot g^q)\,,
\eann
\vspace{-0.5cm}
\bann
\mbox{and}\quad \vec z_{kpq}:={}&\left(\frac{z_p-z_q}{(z_k-z_p)(z_k-z_q)}\,,\;\frac{z_k-z_q}{(z_k-z_p)(z_p-z_q)}\,,\;\frac{z_k-z_p}{(z_p-z_q)(z_k-z_q)}\right)\,.
\eann
\end{lem}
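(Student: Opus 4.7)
The strategy is to substitute the formulas from Lemma \ref{lem:Fderivatives} directly, exploit the fact that at the diagonal matrix $B$ the first-derivative matrices $\dot G^{kl}|_B$ and $\dot F^{kl}|_B$ are themselves diagonal, and then regroup the resulting expression according to the combinatorial pattern of contributing index triples.

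Explicitly, the diagonal form $\dot G^{kl}|_B = \dot g^k\delta^{kl}$ reduces the outer contraction to a single sum on $k$, and for each fixed $k$ the array $(T_{kpq})_{p,q}$ is symmetric, so applying formula \eqref{E:D2F} with $T_{kpq}$ in place of $B_{pq}$ expresses $\ddot F^{pq,rs}T_{kpq}T_{krs}$ as $\ddot f^{pq}T_{kpp}T_{kqq}+2\sum_{p>q}\frac{\dot f^p-\dot f^q}{z_p-z_q}(T_{kpq})^2$, and analogously for $\ddot G$. Multiplying by $\dot g^k$ and $\dot f^k$ respectively, subtracting, and summing over $k$ produces a diagonal contribution $(\dot g^k\ddot f^{pq}-\dot f^k\ddot g^{pq})T_{kpp}T_{kqq}$, which is precisely the first term in \eqref{eq:decomp}, together with the off-diagonal remainder
\[
R:=2\sum_k\sum_{p>q}\frac{\dot g^k(\dot f^p-\dot f^q)-\dot f^k(\dot g^p-\dot g^q)}{z_p-z_q}(T_{kpq})^2.
\]

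It remains to split $R$ according to whether $k\in\{p,q\}$ or not. When $k=p$ the numerator collapses to $\dot f^p\dot g^q-\dot g^p\dot f^q$, and by total symmetry of $T$ one has $(T_{ppq})^2=(T_{qpp})^2$; when $k=q$ the same collapse gives $(T_{qpq})^2=(T_{pqq})^2$. Together these two subcases produce the middle term of \eqref{eq:decomp}. When $k,p,q$ are all distinct, each unordered triple $\{a,b,c\}$ with $a>b>c$ receives contributions from the three orderings $(k,p,q)=(a,b,c),(b,a,c),(c,a,b)$, each carrying the common factor $(T_{abc})^2$. The principal obstacle---really the only non-routine step---is to verify that the sum of the corresponding three coefficients equals $(\vec g_{abc}\times\vec f_{abc})\cdot\vec z_{abc}$. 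This is checked by placing the three fractions over the common denominator $(z_a-z_b)(z_a-z_c)(z_b-z_c)$; after expansion and collection the numerator factors exactly through the three components of the cross product $\vec g\times\vec f$, the components of $\vec z_{kpq}$ being designed precisely to supply the compensating factors.
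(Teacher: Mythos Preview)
Your proof is correct and follows essentially the same route as the paper: apply Lemma~\ref{lem:Fderivatives} to separate the diagonal Hessian term from the off-diagonal remainder $R$, split $R$ into the cases $k\in\{p,q\}$ and $k\notin\{p,q\}$, and in the latter case collect the three contributions from each unordered triple over the common denominator $(z_a-z_b)(z_a-z_c)(z_b-z_c)$ to recognise the cross-product expression. The paper displays the intermediate algebra for this last regrouping more explicitly, but the argument is identical.
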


\begin{proof}
Since $B$ is diagonal, Lemma \ref{lem:Fderivatives} yields (supressing the dependence on $B$)
\bann
(\dot G^{kl}\ddot F^{pq,rs}-\dot F^{kl}\ddot G^{pq,rs})T_{kpq}T_{lrs} ={}&\sum_{k,p,q}(\dot g^{k}\ddot f^{pq}-\dot f^{k}\ddot g^{pq})T_{kpp}T_{kqq}\\
&+2\sum_{k}\sum_{p>q}\left(\dot g^k\frac{\dot f^p-\dot f^q}{z_p-z_q}-\dot f^k\frac{\dot g^p-\dot g^q}{z_p-z_q}\right)(T_{kpq})^2\,.\\
\eann
We now decompose the second term into the terms satisfying $k=p$, $k=q$, $k>p$, $p>k>q$, and $q>k$ respectively:
\begin{align*}
\sum_{k}\sum_{p>q}&\left(\dot g^k\frac{\dot f^p-\dot f^q}{z_p-z_q}-\dot f^k\frac{\dot g^p-\dot g^q}{z_p-z_q}\right)(T_{kpq})^2\\
={}&\sum_{p>q}\left(\dot g^p\frac{\dot f^p-\dot f^q}{z_p-z_q}-\dot f^p\frac{\dot g^p-\dot g^q}{z_p-z_q}\right)(T_{ppq})^2+\sum_{p>q}\left(\dot g^q\frac{\dot f^p-\dot f^q}{z_p-z_q}-\dot f^q\frac{\dot g^p-\dot g^q}{z_p-z_q}\right)(T_{qpq})^2\\
&+\left(\sum_{k>p>q}+\sum_{p>k>q}+\sum_{p>q>k}\right)\left(\dot g^k\frac{\dot f^p-\dot f^q}{z_p-z_q}-\dot f^k\frac{\dot g^p-\dot g^q}{z_p-z_q}\right)(T_{kpq})^2\\
={}&\sum_{p>q}\frac{\dot f^p\dot g^q-\dot g^p\dot f^q}{z_p-z_q}\Big((T_{pqq})^2+(T_{qpp})^2\Big)+\sum_{k>p>q}\left(\dot g^k\frac{\dot f^p-\dot f^q}{z_p-z_q}-\dot f^k\frac{\dot g^p-\dot g^q}{z_p-z_q}\right.\\
&+\left.\dot g^p\frac{\dot f^k-\dot f^q}{z_k-z_q}-\dot f^p\frac{\dot g^k-\dot g^q}{z_k-z_q}+\dot g^q\frac{\dot f^k-\dot f^p}{z_k-z_p}-\dot f^q\frac{\dot g^k-\dot g^p}{z_k-z_p}\right)(T_{kpq})^2\\
={}&\sum_{p>q}\frac{\dot f^p\dot g^q-\dot g^p\dot f^q}{z_p-z_q}\Big((T_{pqq})^2+(T_{qpp})^2\Big)+\sum_{k>p>q}\left((\dot g^p\dot f^q-\dot f^q\dot g^p)\left(\frac{1}{z_k-z_p}-\frac{1}{z_k-z_q}\right)\right.\\
&-\left.(\dot g^k\dot f^q-\dot f^k\dot g^q)\left(\frac{1}{z_p-z_q}+\frac{1}{z_k-z_p}\right)+(\dot g^k\dot f^p-\dot f^k\dot g^p)\left(\frac{1}{z_p-z_q}-\frac{1}{z_k-z_q}\right)\right)(T_{kpq})^2\\
={}&\sum_{p>q}\frac{\dot f^p\dot g^q-\dot g^p\dot f^q}{z_p-z_q}\Big((T_{pqq})^2+(T_{qpp})^2\Big)+\sum_{k>p>q}(\vec g_{kpq}\times \vec f_{kpq})\cdot \vec z_{kpq}(T_{kpq})^2\,.
\end{align*}
\end{proof}

We complete this section by proving that $(m+1)$-convexity is preserved by the flow \eqref{eq:CF}, so that this assumption need only be made on initial data:
\begin{prop}\label{prop:kconvexpreserved}
Let $X$ be a solution of \eqref{eq:CF} such that Conditions (\ref{cond:homogeneity})--(\ref{cond:convexity}) are satisfied. Suppose that there is some $m\in\{1,\dots,n-1\}$ and some $\beta>0$ such that 
$$
\kappa_{\sigma(1)}(x,0)+\dots+\kappa_{\sigma(m)}(x,0)\geq \beta F(x,0)
$$
for all $x\in M$ and all permutations $\sigma\in P_n$. Then this estimate persists at all later times.
\end{prop}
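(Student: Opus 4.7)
The plan is to apply the parabolic maximum principle to the Weingarten tensor, interpreting the hypothesis as asserting that $\WW$ takes values in a certain convex subset of symmetric endomorphisms. Set $G(A) := \sigma_m^-(A) - \beta F(A)$, where $\sigma_m^-(A) := \sum_{i=1}^m \lambda_i(A)$ denotes the sum of the smallest $m$ eigenvalues of $A$ (ordered $\lambda_1\leq\cdots\leq\lambda_n$). The assumption that $\kappa_{\sigma(1)}+\cdots+\kappa_{\sigma(m)} \geq \beta F$ for every $\sigma \in P_n$ is equivalent to $G(\WW) \geq 0$, since the minimum over $\sigma$ of that sum equals $\sigma_m^-$. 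Moreover $\sigma_m^-$ is concave, being the infimum over $m$-dimensional subspaces $V$ of the linear functional $A\mapsto\tr(A|_V)$, and $F$ is convex by Condition (\ref{cond:convexity}); hence $G$ is concave and the preserved region $\{A : G(A)\geq 0\}$ is a convex subset of the space of symmetric matrices.

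Suppose for contradiction that $G(\WW)\geq 0$ first fails at some $(x_0,t_0)\in M\times(0,T)$. After replacing $\beta$ by a slightly smaller $\beta'$ and sending $\beta'\to\beta$ at the end, we may assume $G(\WW)>0$ strictly on $M\times[0,t_0)$. The principal obstacle is that $\sigma_m^-$ is not smooth when $\kappa_m=\kappa_{m+1}$; I handle this via a parallel-transport barrier. Let $V_0\subset T_{x_0}M$ be an $m$-dimensional subspace realizing $\sigma_m^-(\WW(x_0,t_0))$, let $\pi$ be the orthogonal projection onto $V_0$, and set $\tilde u(x,t):=\pi^{ab}(x,t)\W_{ab}(x,t)-\beta F(x,t)$, where $\pi^{ab}$ is parallel transported to a spacetime neighborhood of $(x_0,t_0)$. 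The min-characterization of $\sigma_m^-$ gives $\tilde u\geq G\geq 0$ on $M\times[0,t_0]$, with equality at $(x_0,t_0)$, so the parabolic maximum principle delivers $(\pd_t-\eL)\tilde u(x_0,t_0)\leq 0$.

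The remaining step is to compute $(\pd_t-\eL)\tilde u$ at $(x_0,t_0)$ and show that it is non-negative, forcing equality and then, via the strong maximum principle, contradicting the strict positivity of $G$ before $t_0$. Choosing the frame at $(x_0,t_0)$ in which $\W$ is diagonal with ordered eigenvalues $\kappa_1\leq\cdots\leq\kappa_n$ (so $\pi$ is diagonal, with ones in the first $m$ entries and zeros thereafter), the evolution of $\tilde u$ matches at this point the right-hand side of \eqref{eq:evolvG} applied to the locally smooth symmetric representative $g=\sigma_m^--\beta f$ of $G$: a reaction term $G|\WW|^2_F$, which vanishes because $G(x_0,t_0)=0$, plus a gradient term $Q(\cd\W,\cd\W)$. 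Invoking Lemma \ref{lem:decomposition} for this $g$, the first term is non-negative because $\ddot\sigma_m^-=0$ and $\ddot f\geq 0$; the second term is non-negative in each of the cases where $p,q$ lie in $\{1,\dots,m\}$ or $\{m+1,\dots,n\}$, by the monotonicity and convexity of $f$; and the cross-product term, after case analysis on how the triple $k>p>q$ distributes between $\{1,\dots,m\}$ and $\{m+1,\dots,n\}$, reduces (upon clearing the positive common denominator $(z_k-z_p)(z_p-z_q)(z_k-z_q)$) to a quadratic form of the shape $(B-C)a^2+2(A-C)ab+(A-B)b^2$ with $A\geq B\geq C$, which is manifestly non-negative. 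This gives $Q\geq 0$ at $(x_0,t_0)$, completing the argument.
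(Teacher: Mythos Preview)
Your overall strategy is the same as the paper's: fix, via parallel transport, an $m$-frame (equivalently, a rank-$m$ orthogonal projection $\pi$) realising $\sigma_m^-$ at the putative first failure point, and apply the maximum principle to the resulting smooth linear barrier $\tilde u=\pi^{ab}\W_{ab}-\beta F$.

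The gap is the claim that $(\pd_t-\eL)\tilde u$ at $(x_0,t_0)$ equals the right-hand side of \eqref{eq:evolvG} for $g=\sigma_m^--\beta f$. It does not. The barrier $\tilde u$ depends on $\W$ \emph{linearly} through the fixed $\pi$, so its second variation in $\W$ is $-\beta\ddot F$; by contrast $G=\sigma_m^-(\W)-\beta F$ has $\ddot G=\ddot(\sigma_m^-)-\beta\ddot F$, and $\ddot(\sigma_m^-)$ carries the nonzero off-diagonal contributions $-2\sum_{p>m\geq q}(\kappa_p-\kappa_q)^{-1}(B_{pq})^2$ from \eqref{E:D2F}. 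With the time-extension of the frame chosen so that it stays orthonormal (the paper takes $\pd_t\xi_\alpha^i=F\xi_\alpha^j{\W_j}^i$, which you leave unspecified), one actually gets
\[
(\pd_t-\eL)\tilde u=\pi^{kl}\ddot F^{pq,rs}\cd_k\W_{pq}\cd_l\W_{rs}+|\WW|^2_F\,\tilde u,
\]
whereas \eqref{eq:evolvG} would produce $Q=\pi^{kl}\ddot F\cd\W\cd\W-\dot F^{kl}\ddot(\sigma_m^-)\cd\W\cd\W$. Since $\ddot(\sigma_m^-)\leq 0$, one only has $Q\geq(\pd_t-\eL)\tilde u$, so your proof that $Q\geq 0$ via Lemma~\ref{lem:decomposition} does not by itself yield $(\pd_t-\eL)\tilde u\geq 0$. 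The correct expression above, however, is non-negative at the minimum (where $\tilde u=0$) simply because $\pi\geq 0$ and $\ddot F\geq 0$ by Condition~(\ref{cond:convexity}); this is exactly the paper's computation, and it makes the entire detour through Lemma~\ref{lem:decomposition} and the cross-product case analysis unnecessary. (The paper also replaces your strong-maximum-principle endgame with a cleaner $\varepsilon\,\E^{(1+C)t}$-perturbation.)
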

\begin{proof}
Denote by $SM$ the unit tangent bundle over $M\times[0,T)$ and consider the function $Z$ defined on $\oplus^m SM$ 
 by 
$$
Z(x,t,\xi_1,\ldots\xi_m)=\sum_{\alpha=1}^m h(\xi_\alpha,\xi_\alpha)-\beta F(x,t)\,.
$$
Since we have
$$
\inf_{\xi_1,\dots,\xi_m\in S_{(x,t)}M}Z(x,t,\xi_1,\dots,\xi_m)=\kappa_{\sigma(1)}(x,t)+\dots+\kappa_{\sigma(m)}(x,t)-\beta F(x,t)
$$
for some $\sigma\in P_n$, it suffices to show that $Z$ remains non-negative. First fix any $t_1\in [0,T)$ and consider the function $Z_\varepsilon(x,t,\xi_1,\ldots\xi_m):=Z(x,t,\xi_1,\ldots\xi_m)+\varepsilon \E^{(1+C)t}$, where $C:=\sup_{M\times [0,t_1]}|\WW|^2_F$. Note that $C$ is finite since $M$ is compact and $\dot F$ is bounded. Observe that $Z_\varepsilon$ is positive when $t=0$. We will show that $Z_\varepsilon$ remains positive on $M\times [0,t_1]$ for all $\varepsilon>0$. So suppose to the contrary that $Z_\varepsilon$ vanishes at some point $(x_0,t_0,\xi^0_1,\ldots\xi^0_m)$. We may assume that $t_0$ is the first such time. 
Now extend the vector $\xi^0:=(\xi^0_1,\ldots\xi^0_m)$ to a field $\xi:=(\xi_1,\dots,\xi_n)$ near $(x_0,t_0)$ by parallel translation in space and solving
\bann
\frac{\partial \xi_\alpha^i}{\partial t}=F\xi_\alpha^j{\W_j}^i\,.
\eann
Since the metric evolves according to
\bann
\pd_tg_{ij}=-2h_{ij}
\eann
the resulting fields have unit length. Now recall (see for example \cite{hypersurfaces}) the following evolution equation for the second fundamental form:
\bann
\pd_t\W_{ij}={}&\eL \W_{ij}+\ddot F^{pq,rs}\cd_i\W_{pq}\cd_j\W_{rs}+|\WW|^2_F\W_{ij}-2Fh^2_{ij}\,,
\eann
where $\eL:=\dot F^{kl}\cd_k\cd_l$ and $|\WW|^2_F:=\dot F^{kl}\W^2_{kl}$. It follows that
\bann
(\pd_t-\eL)\lb Z_\varepsilon(x,t,\xi) \rb={}&\varepsilon (1+C)\E^{(1+C)t}+\sum_{\alpha=1}^m\ddot F^{pq,rs}\cd_{\xi_\alpha}\W_{pq}\cd_{\xi_\alpha}\W_{rs}+|\WW(x,t)|^2_FZ(x,t,\xi)\\
\geq{}&\varepsilon(1+C)\E^{(1+C)t}+|\WW(x,t)|^2_FZ(x,t,\xi)\,.
\eann
Since the point $(x_0,t_0,\xi_{t=t_0})$ is a minimum of $Z_\varepsilon$, we obtain
\bann
0\geq (\pd_t-\eL)\big|_{(x_0,t_0)}\lb Z_\varepsilon (x,t,\xi)\rb \geq {}&\varepsilon (1+C)\E^{(1+C)t_0}-C\varepsilon\E^{(1+C)t_0}=\varepsilon\E^{(1+C)t_0}>0\,.
\eann
This is a contradiction, implying that $Z_\varepsilon$ cannot vanish at any time in the interval $[0,t_1]$. Since $\varepsilon>0$ was arbitrary, we find $Z\geq 0$ at all times in the interval $[0,t_1]$. Since $t_1\in[0,T)$ was arbitrary, we obtain $Z\geq 0$.

\end{proof}


\section{Constructing the pinching function.}

In this section we construct the pinching functions $G_m$ satisfying the conditions in Theorem \ref{thm:CE}. 
Let us first introduce the `pinching cones' 
$$
\Gamma_m:=\{z\in \Gamma: z_{\sigma(1)}+\dots+z_{\sigma(m+1)}> c_m^{-1}f(z)\;\mbox{for all}\; \sigma\in H_m\}\,,
$$
where $H_m$ is the quotient of $P_n$, the group of permutations of the set $\{1,\dots,n\}$, by the equivalence relation
$$
\sigma \sim \omega \quad\mbox{if}\quad \sigma\lb\lcb 1,\dots,m+1\rcb\rb=\omega\lb\lcb 1,\dots,m+1\rcb\rb\,.
$$
Using the methods of \cite{Hu84}, and their adaptations to two-convex flows in \cite{HuSi09} and fully non-linear flows in \cite{hypersurfaces}, we will see that, in order to prove Theorem \ref{thm:CE}, it suffices to construct a smooth function $g_m:\Gamma\to\R$ satisfying the following properties:
\begin{props*}
\mbox{}
\ben[(i)]
\item\label{prop:pinching} $g_m(z)\geq 0$ for all $z\in \Gamma$ with equality if and only if $z\in \overline\Gamma_{m}\cap\Gamma$;
\item\label{prop:homogeneous} $g_m$ is smooth an homogeneous of degree one;
\item\label{Qest} for every $\varepsilon>0$ there exists $c_\varepsilon>0$ such that for all diagonal matrices $B$ and totally symmetric 3-tensors $T$, it holds that
\bann
(\dot G_m^{kl}\ddot F^{pq,rs}-\dot F^{kl}\ddot G_m^{pq,rs})\big|_{B}T_{kpq}T_{lrs}\leq-c_\varepsilon\frac{|T|^2}{F}
\eann
for all symmetric matrices $B$ satisfying $\lambda(B)\in\Gamma_0$, and $G_m(B)\geq\varepsilon F(B)$, where $G_m$ is the matrix function corresponding to $g_m$ as described in Section \ref{sec:prelims}, and $\Gamma_0$ is a preserved cone for the flow; and
\item\label{Zest} for every $\delta>0$, $\varepsilon>0$, and $C>0$, there exist $\gamma_\varepsilon>0$ and $\gamma_\delta>0$ such that
\bann
(G_m\dot F^{kl}-F\dot G_m^{kl})\big|_{B}B^2_{kl}\leq-\gamma_\varepsilon F^2(G_m-\delta F)\big|_{B}+\gamma_\delta F^2\big|_{B}
\eann
for all symmetric, $(m+1)$-positive matrices $B$ satisfying $\lambda(B)\in\Gamma_0$, $G_m(B)\geq\varepsilon F(B)$, and $\lambda_{\min}(B)\geq -\delta F(B)-C$.
\een
\end{props*}

Our construction of the pinching function $g_m$ will be independent of the choice of $m$. So let us fix $m\in\{0,1,\dots,n-2\}$ and assume that the flow is $(m+1)$-convex. We first consider the preliminary function $g:\Gamma \to \R$ defined by
\ba\label{eq:g1}
g(z):=f(z)\sum_{\sigma\in H_m}\varphi\left(\frac{\sum_{i=1}^{m+1}z_{\sigma(i)}-\frac{1}{c_m}f(z)}{f(z)}\right)\,,
\ea
where $\varphi:\R\to\R$ is a smooth\footnote{In fact, $\varphi$ need only be twice continuously dfferentiable.} function which is strictly convex and positive, except on $\R_+\cup\{0\}$, where it vanishes identically. Such a function is readily constructed; for example, we could take
\bann
\varphi(r)={}&\begin{dcases}
                   \mbox{}\; r^4e^{-\frac{1}{r^2}} & \mbox{if} \quad r< 0\\
                   \mbox{}\; 0 & \mbox{if} \quad r\geq 0\,.
                \end{dcases}
\eann 
We note that such a function necessarily satisfies $\varphi(r)-r\varphi'(r)\leq 0$ and $\varphi'(r)\leq 0$ with equality if and only if $r\geq 0$.

Now define the scalar $G:M\times[0,T)\to\R$ by $G(x,t):=g(\kappa_1(x,t),\dots,\kappa_n(x,t))$. Then $G$ is a smooth, degree one homogeneous function of the components of the Weingarten map which is invariant under a change of basis. Moreover, $G$ is non-negative and vanishes at, and only at, points for which the sum of the smallest $(m+1)$-principal curvatures is not less than $c_m^{-1}F$. Thus Properties (\ref{prop:pinching}) and (\ref{prop:homogeneous}) are satisfied by $g$.

We now show that property (\ref{Qest}) is satisfied weakly by $g$:
\begin{lem}\label{lem:Qest}
Let $G$ be the matrix function corresponding to the function $g$ defined by \eqref{eq:g1}. Then for any diagonal matrix $B$ and totally symmetric 3-tensor $T$, it holds that
\bann
(\dot G^{kl}\ddot F^{pq,rs}-\dot F^{kl}\ddot G^{pq,rs})\big|_{B}T_{kpq}T_{lrs}\leq 0
\eann
\end{lem}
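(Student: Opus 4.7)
My plan is to use the decomposition of Lemma \ref{lem:decomposition} and show that each of the resulting pieces is non-positive. By smoothness of $G$ on symmetric matrices, it suffices to treat diagonal $B$ with pairwise distinct eigenvalues $z_1,\dots,z_n$ and extend to repeated eigenvalues by continuity. For clarity I combine the last two sums of the decomposition into the single ``off-diagonal'' sum that appears before the final algebraic split in the proof of Lemma \ref{lem:decomposition}; it is then enough to show
\begin{align*}
\sum_k(\dot g^k\ddot f^{pq}-\dot f^k\ddot g^{pq})T_{kpp}T_{kqq} &\leq 0, \\
\sum_k\sum_{p>q}\left(\dot g^k\,\tfrac{\dot f^p-\dot f^q}{z_p-z_q}-\dot f^k\,\tfrac{\dot g^p-\dot g^q}{z_p-z_q}\right)(T_{kpq})^2 &\leq 0.
\end{align*}

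The key algebraic step is to exploit the product form of \eqref{eq:g1} by writing $g(z)=\sum_\sigma\psi\bigl(f(z),u_\sigma(z)\bigr)$ with $\psi(a,b):=a\varphi(b/a)$ and $u_\sigma:=S_\sigma-c_m^{-1}f$, where $S_\sigma:=\sum_{i=1}^{m+1}z_{\sigma(i)}$. Using $\dot u_\sigma^k=\chi_\sigma(k)-c_m^{-1}\dot f^k$ and $\ddot u_\sigma^{pq}=-c_m^{-1}\ddot f^{pq}$ (with $\chi_\sigma(k)$ the indicator that $k\in\sigma(\{1,\dots,m{+}1\})$), direct differentiation yields
\begin{equation*}
\dot g^k=\alpha\,\dot f^k+C_k,\qquad \ddot g^{pq}=\alpha\,\ddot f^{pq}+Q^{pq},
\end{equation*}
where $\alpha:=\sum_\sigma(\psi_a-c_m^{-1}\psi_b)\big|_{(f,u_\sigma)}$ is a scalar, $C_k:=\sum_{\sigma\ni k}\varphi'(r_\sigma)\leq 0$ (since $\varphi'\leq 0$), and $Q^{pq}$ is the bilinear form obtained by contracting the Hessian of $\psi$ with $(\dot f^\cdot,\dot u_\sigma^\cdot)$. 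A short computation gives $\Hess\psi(a,b)=\frac{\varphi''(b/a)}{a}\,ww^{\mathsf T}$ with $w=(-b/a,1)^{\mathsf T}$, so $\Hess\psi\geq 0$ and in fact
\begin{equation*}
Q^{pq}T_{kpp}T_{kqq}=\sum_\sigma\frac{\varphi''(r_\sigma)}{f}\bigl[\textstyle\sum_p\bigl(\chi_\sigma(p)-\tfrac{S_\sigma}{f}\dot f^p\bigr)T_{kpp}\bigr]^2\geq 0
\end{equation*}
for every $k$.

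For the diagonal sum the $\alpha$-contributions cancel, leaving $\sum_k C_k\,\ddot f^{pq}T_{kpp}T_{kqq}-\sum_k\dot f^k\,Q^{pq}T_{kpp}T_{kqq}$; the first sum is $\leq 0$ because $C_k\leq 0$ and, for each fixed $k$, $\ddot f^{pq}T_{kpp}T_{kqq}\geq 0$ by convexity of $f$, and the second is $\leq 0$ because $\dot f^k\geq 0$ and $Q^{pq}T_{kpp}T_{kqq}\geq 0$. The same cancellation turns the off-diagonal summand into
\begin{equation*}
C_k\,\tfrac{\dot f^p-\dot f^q}{z_p-z_q}-\dot f^k\,\tfrac{C_p-C_q}{z_p-z_q}.
\end{equation*}
Convexity of $f$ gives $(\dot f^p-\dot f^q)/(z_p-z_q)\geq 0$; for the second factor I pair each $(m{+}1)$-subset $S\ni p,\,S\not\ni q$ with $S':=(S\setminus\{p\})\cup\{q\}$ and use the identity $r_S-r_{S'}=(z_p-z_q)/f$ together with monotonicity of $\varphi'$ (i.e., $\varphi''\geq 0$) to deduce $(C_p-C_q)/(z_p-z_q)\geq 0$. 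Since $C_k\leq 0$ and $\dot f^k\geq 0$, both summands are then $\leq 0$, and the claim follows after multiplying by $(T_{kpq})^2\geq 0$ and summing.

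The main obstacle is purely algebraic: isolating the combination $\alpha$ so that the dominant $\ddot f^{pq}$ terms cancel, recognising that the residual Hessian of $\psi$ is rank-one positive semi-definite, and spotting the subset-swap trick that reduces $(C_p-C_q)/(z_p-z_q)$ to convexity of $\varphi$. No inequality beyond convexity of $f$ and $\varphi$ and the sign conditions $\varphi'\leq 0$, $\varphi''\geq 0$ is used.
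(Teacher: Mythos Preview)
Your proof is correct and uses the same ingredients as the paper's: the decomposition of Lemma~\ref{lem:decomposition}, convexity of $f$ and of $\varphi$, the sign $\varphi'\le 0$, and the subset-swap pairing (which is precisely the paper's Lemma~\ref{lem:pairing}, with your $C_k$ equal to their $P_k$).

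The one genuine difference is organisational, and it is in your favour. The paper applies the \emph{full} three-term decomposition of Lemma~\ref{lem:decomposition} and must then separately control the triple-distinct-index term $\sum_{k>p>q}(\vec g_{kpq}\times\vec f_{kpq})\cdot\vec z_{kpq}\,(T_{kpq})^2$ via an explicit cross-product computation and two applications of Lemma~\ref{lem:pairing}. You instead step back to the pre-split off-diagonal sum $\sum_k\sum_{p>q}\bigl(\dot g^k\tfrac{\dot f^p-\dot f^q}{z_p-z_q}-\dot f^k\tfrac{\dot g^p-\dot g^q}{z_p-z_q}\bigr)(T_{kpq})^2$ and observe that, after the $\alpha$-cancellation, each summand $C_k\tfrac{\dot f^p-\dot f^q}{z_p-z_q}-\dot f^k\tfrac{C_p-C_q}{z_p-z_q}$ is already non-positive termwise. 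This avoids the cross-product algebra entirely. Your repackaging via $\psi(a,b)=a\varphi(b/a)$, with its rank-one positive semi-definite Hessian, also makes the diagonal term cleaner than the paper's direct expansion, though the content is identical.
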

\begin{proof}
We will show that each of the terms in the decomposition \eqref{eq:decomp} in Lemma \ref{lem:decomposition} is non-positive. Note that it suffices to compute at matrices having distinct eigenvalues, since the result at an arbitrary symmetric matrix $B$ may be obtained by taking a limit $B^{(k)}\to B$ such that each matrix $B^{(k)}$ has distinct eigenvalues. Thus we may assume that the eigenvalues satisfy $z_1<\dots <z_n$. We first compute,
\bann
\dot g^k ={}&\dot f^k\sum_{\sigma\in H_m}\varphi\left(r_\sigma\right)+\sum_{\sigma\in H_m}\varphi'\left(r_\sigma\right)\sum_{i=1}^{m+1}\left({\delta_{\sigma(i)}}^k -\frac{z_{\sigma(i)}}{f}\dot f^k\right)\\
={}&\dot f^k\sum_{\sigma\in H_m}\lb \varphi\left(r_\sigma\right)-\varphi'\left(r_\sigma\right)\frac{\sum_{i=1}^{m+1}z_{\sigma(i)}}{f}\rb +\sum_{\sigma\in H_m}\sum_{i=1}^{m+1}\varphi'\left(r_\sigma\right){\delta_{\sigma(i)}}^k\,,\\
\ddot g^{pq} ={}&\left(\sum_{\sigma\in H_m}\varphi\left(r_\sigma\right)-\sum_{\sigma\in H_m}\varphi'\left(r_\sigma\right)\frac{\sum_{i=1}^{m+1}z_{\sigma(i)}}{f}\right)\ddot f^{pq}\\
&+\sum_{\sigma\in H_m}\frac{\varphi''(r_\sigma)}{f}\sum_{i=1}^{m+1}\left({\delta_{\sigma(i)}}^p-\frac{z_{\sigma(i)}}{f}\dot f^p\right)\sum_{i=1}^{m+1}\left({\delta_{\sigma(i)}}^q-\frac{z_{\sigma(i)}}{f}\dot f^q\right)\,,
\eann
where we are denoting $r_\sigma(z):=\frac{\sum_{i=1}^{m+1}z_{\sigma(i)}-c_m^{-1}f(z)}{f(z)}$. It follows that
\bann
\dot g^{k}\ddot f^{pq}-\dot f^{k}\ddot g^{pq} ={}&\sum_{\sigma\in H_m}\sum_{i=1}^{m+1}\varphi'(r_\sigma){\delta_{\sigma(i)}}^k\ddot f^{pq}\\
&-\dot f^k\sum_{\sigma\in H_m}\frac{\varphi''(r_\sigma)}{f}\sum_{i=1}^{m+1}\left({\delta_{\sigma(i)}}^p-\frac{z_{\sigma(i)}}{f}\dot f^p\right)\sum_{i=1}^{m+1}\left({\delta_{\sigma(i)}}^q-\frac{z_{\sigma(i)}}{f}\dot f^q\right)\,.
\eann
If we fix the index $k$ and set $\xi_p=T_{kpp}$, then, by convexity of $\varphi$ and positivity of $\dot f^k$, we have
\bann
-\dot f^k\sum_{\sigma\in H_m}\frac{\varphi''(r_\sigma)}{f}\sum_{i=1}^{m+1}\left({\delta_{\sigma(i)}}^p-\frac{z_{\sigma(i)}}{f}\dot f^p\right)&\sum_{i=1}^{m+1}\left({\delta_{\sigma(i)}}^q-\frac{z_{\sigma(i)}}{f}\dot f^q\right)\xi_p\xi_q\\
 ={}&-\dot f^k\sum_{\sigma\in H_m}\frac{\varphi''(r_\sigma)}{f}\left(\sum_{i=1}^{m+1}\left({\delta_{\sigma(i)}}^p-\frac{z_{\sigma(i)}}{f}\dot f^p\right)\xi_p\right)^2\\
 \leq{}&0\,.
\eann
On the other hand, since $\varphi$ is monotone non-increasing, and $f$ is convex, we have
\bann
\varphi'(r_\sigma)\sum_{i=1}^{m+1}{\delta_{\sigma(i)}}^k\ddot f^{pq}\xi_p\xi_q \leq {}&0
\eann
for each $\sigma$. Since both inequalities hold for all $k$, we deduce that
\bann
\sum_{k,p,q}\big(\dot g^{k}\ddot f^{pq}-\dot f^{k}\ddot g^{pq}\big)T_{kpp}T_{kqq} \leq{}&0\,.
\eann
We next consider
\bann
\dot f^p\dot g^q-\dot g^p\dot f^q ={}&\sum_{\sigma\in H_m}\sum_{i=1}^{m+1}\varphi'(r_\sigma)\lb{\delta_{\sigma(i)}}^q\dot f^p-{\delta_{\sigma(i)}}^p\dot f^q\rb\nonumber\\
={}&\lb\sum_{\sigma\in O_q}\varphi'(r_\sigma)\dot f^p-\sum_{\sigma\in O_p}\varphi'(r_\sigma)\dot f^q\rb
\eann
Thus, if $z_p>z_q$, we obtain
\bann
\dot f^p\dot g^q-\dot g^p\dot f^q\leq {}&\dot f^p\lb\sum_{\sigma\in O_q}\varphi'(r_\sigma)-\sum_{\sigma\in O_p}\varphi'(r_\sigma)\rb\,.\\
\eann
where we have introduced the sets $O_a:=\{\sigma\in H_m:a\in \sigma(\{1,\dots,m+1\})\}$. We now show that the term in brackets is non-positive whenever $z_p>z_q$:
\begin{lem}\label{lem:pairing}
If $z_p>z_q$, then
\bann
\sum_{\sigma\in O_{p}}\varphi'(r_\sigma)-\sum_{\sigma\in O_{q}}\varphi'(r_\sigma)\geq 0\,.
\eann
\end{lem}
\begin{proof}[Proof of Lemma \ref{lem:pairing}]
First note that
\bann
\sum_{\sigma\in O_{p}}\varphi'(r_\sigma)-\sum_{\sigma\in O_{q}}\varphi'(r_\sigma)=\sum_{\sigma\in O_{p,q}}\varphi'(r_\sigma)-\sum_{\sigma\in O_{q,p}}\varphi'(r_\sigma)\,,
\eann
where $O_{a,b}:=O_a\setminus O_b$. Next observe that, if $\sigma\in O_{p,q}$, then 
\ba\label{eq:pairing}
z_{\sigma(1)}+\dots+z_{\sigma(m+1)}=z_{p}+z_{\hat\sigma(i_1)}\dots+z_{\hat \sigma(i_m)}
\ea
for some $\hat\sigma\in H_{m-2}(p,q):=P_{n-2}(p,q)/\sim$, where $P_{n-2}(p,q)$ is the set of permutations of $\{1,\dots,n\}\setminus\{p,q\}$, $\{i_1,\dots,i_{m}\}$ are a choice of $m$ elements of $\{1,\dots,n\}\setminus\{p,q\}$, and $\sim$ is defined by
$$
\hat\sigma\sim\hat\omega\quad\mbox{if}\quad \hat\sigma(\{i_1,\dots,i_m\})=\hat\omega(\{i_1,\dots,i_m\})\,.
$$
Observe also that the converse holds (that is, \eqref{eq:pairing} defines a bijection), so that
\bann
\sum_{\sigma\in O_{q,p}}\varphi'(r_\sigma)-\sum_{\sigma\in O_{p,q}}\varphi'(r_\sigma)={}&\sum_{\hat\sigma\in H_{m-2}(p,q)}\bigg[\varphi'\lb\frac{z_p+\sum_{k=1}^{m}z_{\hat\sigma(i_k)}-c_m^{-1}f}{f}\rb\\{}&\qquad\qquad\qquad-\varphi'\lb\frac{z_q+\sum_{k=1}^{m}z_{\hat\sigma(i_k)}-c_m^{-1}f}{f}\rb\bigg]\,.
\eann
Since $z_p>z_q$ the claim follows from convexity of $\varphi$.
\end{proof}
Thus,
\bann
\sum_{p>q}\frac{\dot f^p\dot g^q-\dot g^p\dot f^q}{z_p-z_q}\Big((T_{pqq})^2+(T_{qpp})^2\Big) \leq{}&0\,.
\eann
We now compute
\bann
\vec g_{kpq} ={}&\left(\frac{g}{f}-\sum_{\sigma\in H_m}\varphi'(r_\sigma)\sum_{i=1}^{m+1}\frac{z_{\sigma(i)}}{f}\right)\vec f_{kpq}+\sum_{\sigma\in H_m}\varphi'(r_\sigma)\sum_{i=1}^{m+1}\big({\delta_{\sigma(i)}}^k,{\delta_{\sigma(i)}}^p,{\delta_{\sigma(i)}}^q\big)\,,
\eann
so that
\bann
\Big(\vec g_{kpq}\times\vec f_{kpq}\Big)\cdot\vec z_{kpq} ={}&\sum_{\sigma\in H_m}\sum_{i=1}^{m+1}\varphi'(r_\sigma)\left[\big({\delta_{\sigma(i)}}^k,{\delta_{\sigma(i)}}^p,{\delta_{\sigma(i)}}^q\big)\times\vec f_{kpq}\right]\cdot\vec z_{kpq}\\
 ={}&\sum_{\sigma\in H_m}\sum_{i=1}^{m+1}\varphi'(r_\sigma)\left[\frac{({\delta_{\sigma(i)}}^p\dot f^q-{\delta_{\sigma(i)}}^q\dot f^p)(z_p-z_q)}{(z_k-z_p)(z_k-z_q)}\right.\\
&\hspace{3cm} +\frac{({\delta_{\sigma(i)}}^q\dot f^k-{\delta_{\sigma(i)}}^k\dot f^q)(z_k-z_q)}{(z_k-z_p)(z_p-z_q)}\\
&\left.\hspace{3cm}+\frac{({\delta_{\sigma(i)}}^k\dot f^p-{\delta_{\sigma(i)}}^p\dot f^k)(z_k-z_p)}{(z_k-z_q)(z_p-z_q)}\right].
\eann
Removing the positive factor $\alpha_{kpq}:=[(z_k-z_p)(z_z-z_q)(z_p-z_q)]^{-1}$ and setting $P_{a}:=\sum_{\sigma\in O_a}\varphi'(r_\sigma)$, we obtain
\bann
\Big(\vec g_{kpq}\times\vec f_{kpq}\Big)\cdot\vec z_{kpq} ={}& \alpha_{kpq}\Big[(P_p\dot f^q-P_q\dot f^p)(z_p-z_q)^2+(P_q\dot f^k-P_k\dot f^q)(z_p-z_q)^2\\
{}&\qquad+(P_k\dot f^p-P_p\dot f^k)(z_p-z_q)^2\Big]\,.
\eann
Applying Lemma \ref{lem:pairing} yields
\bann
\Big(\vec g_{kpq}\times\vec f_{kpq}\Big)\cdot\vec z_{kpq} \leq{}& \alpha_{kpq}\lb P_q\dot f^k-P_k\dot f^q\rb\lsb (z_k-z_q)^2-(z_k-z_p)^2-(z_p-z_q)^2\rsb\,.
\eann
Since the term in square brackets is non-negative, applying Lemma \ref{lem:pairing} once more yields
$$
\Big(\vec g_{kpq}\times\vec f_{kpq}\Big)\cdot\vec z_{kpq} \leq 0\,.
$$

This completes the proof of the lemma.
\end{proof}

In particular, Lemma \ref{lem:Qest} yields an upper bound for $G/F$ along the flow:
\begin{cor}
There exists $C_1<\infty$ such that $G/F\leq C_1$ along the flow.
\end{cor}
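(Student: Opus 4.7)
The plan is to compute the evolution of the scale-invariant quantity $G/F$ and apply the parabolic maximum principle, exploiting the fact that Lemma \ref{lem:Qest} provides the pointwise non-positivity of the reaction term appearing in the evolution of $G$. Since $F$ is a smooth degree-one homogeneous function of curvature which is bounded away from zero on the preserved cone $\overline{\Gamma}_0 \setminus\{0\}$ (by Lemma \ref{lem:homogeneousbounds}) and since $\kappa(x,t)\in\overline\Gamma_0\setminus\{0\}$ along the flow, the ratio $G/F$ is a well-defined smooth function on $M\times[0,T)$.

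First I would combine the evolution equation \eqref{eq:evolvG} for $G$ with the evolution $(\pd_t-\eL)F=F|\WW|_F^2$ for the speed. A routine computation using the product/quotient rule (and the fact that $\eL$ is a second-order linear operator) yields
\begin{align*}
(\pd_t-\eL)\lb\frac{G}{F}\rb
={}&\frac{1}{F}(\pd_t-\eL)G-\frac{G}{F^2}(\pd_t-\eL)F+\frac{2}{F}\dot F^{kl}\cd_k\lb\frac{G}{F}\rb\cd_l F\,.
\end{align*}
Substituting the two evolution equations, the reaction-type terms $G|\WW|_F^2/F$ and $G F|\WW|_F^2/F^2$ cancel exactly, leaving
\begin{align*}
(\pd_t-\eL)\lb\frac{G}{F}\rb
={}&\frac{1}{F}\,Q(\cd\WW,\cd\WW)+\frac{2}{F}\dot F^{kl}\cd_k\lb\frac{G}{F}\rb\cd_l F\,,
\end{align*}
where $Q(\cd\WW,\cd\WW)=(\dot G^{kl}\ddot F^{pq,rs}-\dot F^{kl}\ddot G^{pq,rs})\cd_k\W_{pq}\cd_l\W_{rs}$.

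By Lemma \ref{lem:Qest} applied pointwise in an orthonormal frame of eigenvectors of $\WW$ (with $T_{kpq}=\cd_k\W_{pq}$), the first term on the right is non-positive. Hence $G/F$ satisfies
\begin{align*}
(\pd_t-\eL)\lb\frac{G}{F}\rb \leq \frac{2}{F}\dot F^{kl}\cd_k\lb\frac{G}{F}\rb\cd_l F\,,
\end{align*}
which, by uniform parabolicity of $\eL$ (Corollary \ref{cor:uniformparabolicity}) and smoothness of $F$ (hence boundedness of the drift coefficient on any compact subinterval $[0,t_1]\subset[0,T)$), is a linear parabolic differential inequality with no zeroth-order term. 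The parabolic maximum principle therefore implies that $\max_{M\times\{t\}} G/F$ is non-increasing in $t$, giving
\begin{align*}
\frac{G}{F}\leq C_1:=\max_{x\in M}\frac{G(x,0)}{F(x,0)}<\infty\,.
\end{align*}
The finiteness of $C_1$ uses compactness of $M$ together with positivity of $F$ on the initial hypersurface. No step here should present any real obstacle: the delicate analytic work was already carried out in Lemma \ref{lem:Qest}, and the present corollary is essentially a bookkeeping application of the maximum principle to the conformally rescaled quantity $G/F$.
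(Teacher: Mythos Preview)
Your proof is correct and is exactly the argument the paper has in mind: the paper's own proof merely says ``In view of Lemma \ref{lem:Qest} and the evolution equation \eqref{eq:evolvG} this is a simple application of the maximum principle,'' and you have simply written out those details. The only superfluous remark is the appeal to boundedness of the drift coefficient; since the gradient term is linear in $\cd(G/F)$, it vanishes at a spatial maximum regardless, so no such bound is needed.
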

\begin{proof}
In view of Lemma \ref{lem:Qest} and the evolution equation \eqref{eq:evolvG} this is a simple application of the maximum principle.
\end{proof}

In order to obtain the uniform estimate required by property (\ref{Qest}), we modify $G$ in order to obtain a function with a strictly positive term in $Q$. A well-known trick (cf. \cite[Theorem 2.14]{HuSi99b}, \cite[Lemma 3.3]{hypersurfaces}) then allows us to extract the required uniform estimate. First, we relabel the preliminary pinching funtion $g\to g_1$ ($G\to G_1$), and consider the new pinching function $g$ defined by:
\ba\label{eq:g}
g:={}&K(g_1,g_2)\;:=\;\frac{g_1^2}{g_2}\,,
\ea
where $g_2(z)=M\sum_{i=1}^nz_i-|z|$ for some large constant $M>>1$, for which $g_2$ is positive along the flow. That there is such a constant follows from applying the maximum principle to the evolution equation \eqref{eq:evolvG} for the function $G_2(x,t):=g_2(\kappa(x,t))$ as in \cite[Lemma 3.1]{hypersurfaces}. Note that $\dot K^1>0$, $\dot K^2<0$ and $\ddot K>0$ wherever $g_1>0$.

Observe that Properties (\ref{prop:pinching}) and (\ref{prop:homogeneous}) are not harmed in the transition from $g_1$ to $g$. We now show that the estimates listed in Properties (\ref{Qest}) and (\ref{Zest}) are satisfied by the curvature function defined in \eqref{eq:g}.

\begin{prop}\label{prop:Qest}
Let $g$ be the pinching function defined by \eqref{eq:g} and $G$ its corresponding matrix function. Then, for every $\varepsilon>0$ there exists $c_\varepsilon>0$ such that for all diagonal matrices $B$ and totally symmetric 3-tensors $T$, it holds that
\bann
(\dot G^{kl}\ddot F^{pq,rs}-\dot F^{kl}\ddot G^{pq,rs})\big|_{B}T_{kpq}T_{lrs}\leq-c_\varepsilon\frac{|T|^2}{F}
\eann
whenever $G(B)\geq\varepsilon F(B)$.
\end{prop}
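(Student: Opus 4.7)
The approach is to exploit the chain rule on $G = G_1^2/G_2$ to write $Q[G]$ as a sum of three manifestly non-positive pieces, and then extract the required strict bound from the strict concavity of $g_2 = M\sum z_i - |z|$.

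First, differentiating $G = G_1^2/G_2$ twice yields
\begin{align*}
\dot G^{kl} &= \tfrac{2G_1}{G_2}\dot G_1^{kl} - \tfrac{G_1^2}{G_2^2}\dot G_2^{kl},\\
\ddot G^{pq,rs} &= \tfrac{2G_1}{G_2}\ddot G_1^{pq,rs} - \tfrac{G_1^2}{G_2^2}\ddot G_2^{pq,rs} + \tfrac{2}{G_2}A^{pq}A^{rs},
\end{align*}
where $A^{pq} := \dot G_1^{pq} - (G_1/G_2)\dot G_2^{pq}$. Substituting and setting $v_k := A^{pq}T_{kpq}$, I obtain the clean identity
\begin{align*}
(\dot G^{kl}\ddot F^{pq,rs} - \dot F^{kl}\ddot G^{pq,rs})T_{kpq}T_{lrs} = \tfrac{2G_1}{G_2}Q[G_1] - \tfrac{G_1^2}{G_2^2}Q[G_2] - \tfrac{2}{G_2}\dot F^{kl}v_k v_l,
\end{align*}
writing $Q[H] := (\dot H^{kl}\ddot F^{pq,rs} - \dot F^{kl}\ddot H^{pq,rs})T_{kpq}T_{lrs}$.

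Each of these three terms is non-positive. The prefactors $2G_1/G_2$, $(G_1/G_2)^2$, $2/G_2$ are all positive wherever $G_1, G_2 > 0$. First, $Q[G_1] \leq 0$ by Lemma \ref{lem:Qest}. Second, since $g_2$ is concave (being linear plus $-|z|$), Lemma \ref{lem:Fderivatives} gives $\ddot G_2 \leq 0$; together with $\ddot F \geq 0$ (convexity of $f$), positive-definiteness of $\dot F$ (Corollary \ref{cor:uniformparabolicity}), and $\dot g_2^k = M - z_k/|z| > 0$ for $M$ large, this yields $Q[G_2] \geq 0$. Third, $\dot F^{kl}v_k v_l \geq 0$ again by positive-definiteness of $\dot F$. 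Hence $Q[G] \leq 0$.

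To upgrade this to a strict uniform estimate, observe that both sides of the target inequality scale as degree $-1$ in $B$ and degree $2$ in $T$. After normalizing $F(B) = 1$ and $|T| = 1$, it suffices to show $Q[G](T,T) \leq -c_\varepsilon < 0$ uniformly on the compact set $\{B : F(B) = 1,\, G(B) \geq \varepsilon,\, \lambda(B) \in \overline{\Gamma_0}\}$. The strict negativity will come from the middle term $-(G_1/G_2)^2 Q[G_2]$: applying Lemma \ref{lem:decomposition} to $g_2$, the off-diagonal coefficient is $(\dot g_2^q - \dot g_2^p)/(z_p - z_q) = 1/|z| > 0$ at any $B$ with distinct eigenvalues, while the diagonal Hessian $-\ddot g_2 = \partial^2 |z|$ is strictly positive on the orthogonal complement of $z \in \R^n$. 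Tracing through the total symmetry of $T$, vanishing of $Q[G_2](T,T)$ first forces all $T_{kpq}$ with $p \neq q$ to vanish; by total symmetry this propagates to force all mixed entries $T_{abb}$ ($a \neq b$) to vanish; and finally the residual diagonal entries $T_{aaa}$ are killed by the non-vanishing of $\ddot g_2^{aa}$ (valid as long as $|z|^2 > z_a^2$). Compactness then delivers a uniform $Q[G_2] \geq c|T|^2$, and combined with $(G_1/G_2)^2 = G/G_2 \geq \varepsilon/\sup G_2 > 0$ (via Lemma \ref{lem:homogeneousbounds}), one obtains the desired bound. A density argument handles $B$ with repeated eigenvalues.

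The principal obstacle is establishing the uniform strict positivity $Q[G_2] \geq c|T|^2$, which hinges on the delicate interplay between strict convexity of $|z|$ in directions transverse to $z$ and the total symmetry of $T$ that propagates vanishing of pure off-diagonal entries to vanishing of mixed-diagonal entries. A secondary point is verifying the non-degeneracy condition $|z|^2 > z_a^2$ on the relevant compact subset of $\overline{\Gamma_0}$, which is a mild matter since these ratios are degree-zero homogeneous and controlled by Lemma \ref{lem:homogeneousbounds}.
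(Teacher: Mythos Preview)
Your proposal is correct and follows essentially the same approach as the paper: both apply the chain rule to $G=K(G_1,G_2)=G_1^2/G_2$, discard the $G_1$ and $\ddot K$ contributions using Lemma~\ref{lem:Qest} and convexity of $K$, and then extract the strict bound from the strict concavity of $g_2$ in non-radial directions via a compactness argument (your identity $Q[G]=\tfrac{2G_1}{G_2}Q[G_1]-\tfrac{G_1^2}{G_2^2}Q[G_2]-\tfrac{2}{G_2}\dot F^{kl}v_kv_l$ is exactly the paper's $\dot K^\alpha Q[G_\alpha]-\dot F\ddot K(\dot G_\alpha,\dot G_\beta)$ written out). Your version makes the chain-rule decomposition and the compactness trick more explicit than the paper, which simply cites \cite[Lemma~3.3]{hypersurfaces}; one small slip is that the off-diagonal coefficient $(\dot g_2^q-\dot g_2^p)/(z_p-z_q)=1/|z|$ you compute comes from Lemma~\ref{lem:Fderivatives} applied to $-\ddot G_2$, not from Lemma~\ref{lem:decomposition}.
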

\begin{proof}
First note that (supressing dependence on $B$)
\bann
(\dot G^{kl}\ddot F^{pq,rs}-\dot F^{kl}\ddot G^{pq,rs})T_{kpq}T_{lrs}={}&\dot K^\alpha (\dot G_\alpha^{kl}\ddot F^{pq,rs}-\dot F^{kl}\ddot G_\alpha^{pq,rs})T_{kpq}T_{lrs}\\
{}&-\dot F^{kl}\ddot K^{\alpha\beta}\dot G_\alpha^{pq}\dot G_\beta^{rs}T_{kpq}T_{lrs}\\
\leq{}&\dot K^2(\dot G_2^{kl}\ddot F^{pq,rs}-\dot F^{kl}\ddot G_2^{pq,rs})T_{kpq}T_{lrs}\\
\leq{}&-\dot K^2\dot F^{kl}\ddot G_2^{pq,rs}T_{kpq}T_{lrs}\,,
\eann
where we used Lemma \ref{lem:Qest}, convexity of $K$, and the inequalities $\dot K^1\geq 0$ and $\dot F\geq 0$ in the first inequality, and the inequalities $\dot G_2\geq 0$ and $\dot K^2\leq 0$, and convexity of $F$ in the second. Since $\dot K^2<0$ whenever $G_1>0$ and $G_2$ is strictly concave in non-radial directions, the claim follows from a well-known trick, exactly as in \cite[Lemma 3.3]{hypersurfaces}.
\end{proof}

The uniform estimate of Proposition \ref{prop:Qest} yields a good bound for the term $Q(\cd\WW,\cd\WW)$ in the evolution equation for the pinching functions $G$. This is a crucial component in obtaining the $L^p$-estimates of the follwing section. 
These are the starting point for the Stampacchia-De Giorgi iteration argument. The second crucial estimate is the Poincar\'e-type inequality, Lemma \ref{lem:Lpestimate} (see also sections 4 and 5 of \cite{HuSi09}; in particular, Lemma 5.5), which we can obtain with the help of property (\ref{Zest}). This estimate (corresponding to Lemma 5.2 of \cite{HuSi09}) provides an estimate on the zero order term that occurs in contracting the Simons-type identity for $\dot F^{pq}\cd_p\cd_q\W_{ij}$ with $\dot G^{ij}$ (cf. \cite[Proposition 4.4]{hypersurfaces}).

\begin{prop}\label{prop:Zest}
Let $g$ be the pinching function defined by \eqref{eq:g} and $G$ its corresponding matrix function. Then, for every $\delta>0$, $\varepsilon>0$, and $C_{\delta}>0$ there exist $\gamma_1>0$ and $\gamma_2>0$ such that
\bann
(F\dot G^{kl}-G\dot F^{kl})\big|_{B}B^2_{kl}\geq \gamma_\varepsilon F^2(G-\delta F)\big|_{B}-\gamma_\delta F^2\big|_{B}
\eann
for all symmetric, $(m+1)$-positive matrices $B$ satisfying $\lambda(B)\in\Gamma_0$, $G_m(B)\geq \varepsilon F(B)$, and $\lambda_{\min}(B)\geq -\delta F(B)-C_{\delta}$.
\end{prop}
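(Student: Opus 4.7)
The argument parallels the structure of Lemma~\ref{lem:Qest} and Proposition~\ref{prop:Qest}, now applied to the quadratic form $(F\dot G^{kl}-G\dot F^{kl})B^2_{kl}$. The plan is to chain-rule through the decomposition $g=K(g_1,g_2)=g_1^2/g_2$, to exploit the pair-by-pair positivity from Lemma~\ref{lem:pairing} for the $g_1$-contribution, and to control sign-mixed terms using the $(m+1)$-positivity of $B$ together with the bound $\lambda_{\min}(B)\geq-\delta F-C_\delta$.

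Since $K(a,b)=a^2/b$ is homogeneous of degree one, Euler's identity gives $\dot K^\alpha G_\alpha=K=G$, so
\[
F\dot G^{kl}-G\dot F^{kl}=\dot K^\alpha\big(F\dot G_\alpha^{kl}-G_\alpha\dot F^{kl}\big)=\tfrac{2G_1}{G_2}\big(F\dot G_1^{kl}-G_1\dot F^{kl}\big)-\tfrac{G_1^2}{G_2^2}\big(F\dot G_2^{kl}-G_2\dot F^{kl}\big).
\]
In an eigenbasis of $B$ with eigenvalues $z_k$, the Euler identities $\sum_k\dot f^k z_k=F$ and $\sum_k\dot g_\alpha^k z_k=G_\alpha$ together with antisymmetrisation produce
\[
\sum_k(F\dot g_\alpha^k-G_\alpha\dot f^k)z_k^2=\sum_{k<l}(\dot f^l\dot g_\alpha^k-\dot f^k\dot g_\alpha^l)(z_k-z_l)z_kz_l,
\]
in the spirit of the decomposition in Lemma~\ref{lem:decomposition}.

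For the $g_1$-factor, Lemma~\ref{lem:pairing} (as used in the proof of Lemma~\ref{lem:Qest}) shows $(\dot f^l\dot g_1^k-\dot f^k\dot g_1^l)(z_k-z_l)\geq 0$ for every pair, so same-sign pairs ($z_kz_l\geq 0$) contribute non-negatively. For mixed-sign pairs ($z_k<0<z_l$) the contribution is non-positive, but the $(m+1)$-positivity of $B$ restricts the number of negative eigenvalues to at most $m$, and the assumption $\lambda_{\min}(B)\geq-\delta F-C_\delta$ controls their magnitude; combined with the uniform bounds on $\dot f^k$, $\dot g_1^k$ and $z_l/F$ provided by Lemma~\ref{lem:homogeneousbounds}, one obtains an estimate of the form
\[
(F\dot G_1-G_1\dot F)B^2\geq c_\varepsilon F^2(G-\delta F)-C(\delta F+C_\delta)F^2,
\]
where the positive ``bulk'' term is extracted from the same-sign pairs using the strict convexity of $\varphi$: the hypothesis $G\geq\varepsilon F$ forces at least one factor $\varphi'(r_\sigma)$ to be bounded away from zero, which turns the pair-by-pair inequality into a quantitative lower bound rather than merely $\geq 0$. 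The $g_2$-factor is handled by direct computation from $g_2(z)=M\sum_iz_i-|z|$, using the same eigenvalue bounds and Lemma~\ref{lem:homogeneousbounds}; the coefficient $(G_1/G_2)^2$ is uniformly bounded, again by Lemma~\ref{lem:homogeneousbounds}.

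Assembling the two contributions, and using that $G\geq\varepsilon F$ keeps $G_1/G_2$ bounded away from zero, the claim follows after choosing $\gamma_\varepsilon$ small enough to absorb residual $O(\delta F^3)$ terms into $\gamma_\varepsilon F^2(G-\delta F)$ and $\gamma_\delta$ large enough to absorb the remaining $O(F^2)$ errors. The main obstacle is the quantitative extraction of positivity in the $g_1$-step: the pair-by-pair argument supplies only $\geq 0$, so one must use the strict convexity of $\varphi$ together with $G\geq\varepsilon F$ to produce a strictly positive multiple of $F^2(G-\delta F)$ from the same-sign pairs, rather than merely a non-negative contribution.
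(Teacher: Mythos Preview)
Your overall architecture matches the paper's: antisymmetrise to a pair sum $\sum_{p>q}(\dot g^p\dot f^q-\dot g^q\dot f^p)z_pz_q(z_p-z_q)$, use Lemma~\ref{lem:pairing} for the sign of each factor, and absorb the mixed-sign pairs with the convexity estimate $\lambda_{\min}\geq-\delta F-C_\delta$. Your explicit chain rule through $K$ is in fact cleaner than what the paper writes (the paper silently replaces $\dot g^p\dot f^q-\dot g^q\dot f^p$ by $P_p\dot f^q-P_q\dot f^p$, i.e.\ it is really computing with $g_1$).

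However, there is a real gap at the crucial step: you do not correctly justify the strict lower bound $c_\varepsilon F^2G$ coming from the ``same-sign'' pairs. Your proposed mechanism --- that $G\geq\varepsilon F$ forces some $\varphi'(r_\sigma)$ to be uniformly negative, hence the pair-by-pair inequality becomes quantitative --- is insufficient. Even with $P_p-P_q$ bounded away from zero, the individual summands $(P_p\dot f^q-P_q\dot f^p)z_pz_q(z_p-z_q)$ can still vanish whenever $z_p=z_q$ or $z_q=0$. In particular, if all the \emph{positive} eigenvalues are equal and there are some small negative ones, the entire same-sign sum vanishes while $G\geq\varepsilon F$ can still hold (the negativity of $r_{\mathrm{id}}$ being supplied by the negative eigenvalues, not by spread among the positive ones). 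So the same-sign sum alone cannot dominate $cF^2G$.

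The paper circumvents this by coupling the two pieces. It first bounds the coefficients $P_p\dot f^q-P_q\dot f^p$ by positive constants, reducing to a purely algebraic quantity, and then forms
\[
S_1:=\sum_{p=m+2}^n\sum_{q=l+1}^{m+1}z_pz_q(z_p-z_q)\;-\;F^2\sum_{k=1}^{l}z_k,
\]
where $l$ is the number of non-positive eigenvalues. Each summand of $S_1$ is non-negative, and $S_1=0$ forces \emph{both} $z_k=0$ for $k\leq l$ \emph{and} all positive eigenvalues equal --- i.e.\ the cylinder $\mathbb{R}^m\times S^{n-m}$, on which $G=0$. Since $S_1/(F^2G)$ is degree-zero homogeneous and strictly positive on the compact set $\overline{\Gamma_0}\cap S^n\cap\{G\geq\varepsilon F\}$, compactness gives $S_1\geq cF^2G$. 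The compensating term $F^2\sum_{k\leq l}z_k$ that was subtracted is then controlled by the convexity estimate, giving the $-\gamma_\delta F^2$ error. This coupling is the idea your sketch is missing; once you have it, the $g_2$ contribution (which you leave vague) is genuinely harmless, since $(F\dot G_2-G_2\dot F)(B^2)$ is degree-three homogeneous and hence bounded by $CF^3$, absorbable into the error after multiplying by the bounded coefficient $\dot K^2$.
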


\begin{proof}
So let $B$ be a symmetric, $(m+1)$-positive matrix with eigenvalues $z_1\leq \dots\leq z_n$. Define $Z(B):=F\dot G(B^2)-G\dot F(B^2)$. Then
\bann
Z(B)={}&f\dot g^pz^2_p-g\dot f^pz_p^2\\
={}&\sum_{p>q}\big(\dot g^p\dot f^q-\dot g^q\dot f^p\big)z_pz_q(z_p-z_q)\\
={}&\sum_{p>q}\big(P_p\dot f^q-P_q\dot f^p\big)z_pz_q(z_p-z_q)\\
={}&\lb\sum_{p>q>l}+\sum_{p>l\geq q}+\sum_{l\geq p> q}\rb\big(P_p\dot f^q-P_q\dot f^p\big)z_pz_q(z_p-z_q)\,,
\eann
where we recall the notation $P_a:=\sum_{\sigma\in O_a}\varphi'(r_\sigma)$ and we have defined $l\leq m$ as the number of non-positive eigenvalues $z_i$. Recalling that $P_p\dot f^q-P_q\dot f^p\geq0$ whenever $z_p\geq z_q$, we discard the final sum and part of the first to obtain
\bann
Z(B)\geq{}&\sum_{p=m+2}^n\sum_{q=l+1}^{m+1}\big(P_p\dot f^q-P_q\dot f^p\big)z_pz_q(z_p-z_q)+\sum_{p=l+1}^n\sum_{q=1}^l\big(P_p\dot f^q-P_q\dot f^p\big)z_pz_q(z_p-z_q)\,.
\eann
Observe that when $a\leq m+1$, we have
$$
P_a\leq \varphi'\lb\frac{z_1+\dots+z_{m+1}-c_m^{-1}f}{f}\rb\,,
$$
which is strictly negative: for it can only vanish if $z_1+\dots+z_{m+1}-c_m^{-1}f\geq 0$, in which case $G(B)=0$, which contradicts $G(B)\geq\varepsilon F(B)>0$. It follows that, for $q\leq m+1$, the term $P_p\dot f^q-P_q\dot f^p\geq \dot f^p(P_p-P_q)$ can only vanish if $P_p=P_q$, which will only occur if $z_p=z_q$ since $\varphi$ is strictly convex where it is positive (cf. Lemma \ref{lem:pairing}). Since $P_p\dot f^q-P_q\dot f^p$ is homogeneous of degree zero with respect to $z$, we obtain the uniform bound
\bann
\sum_{p=m+2}^n\sum_{q=l+1}^{m+1}\big(P_p\dot f^q-P_q\dot f^p\big)z_pz_q(z_p-z_q)\geq c\sum_{p=m+2}^n\sum_{q=l+1}^{m+1}z_pz_q(z_p-z_q)
\eann
for some $c>0$. On the other hand, again by homogeneity, the term $P_p\dot f^q-P_q\dot f^p$ is also bounded above (for all $p$, $q$), in which case we obtain
\bann
\sum_{p=l+1}^n\sum_{q=1}^l\big(P_p\dot f^q-P_q\dot f^p\big)z_pz_q(z_p-z_q)\geq C\sum_{p=l+1}^n\sum_{q=1}^lz_pz_q(z_p-z_q)
\eann
for some $C<\infty$. Agreeing to denote positive constants simply by $c$, we deduce
\ba\label{eq:Z}
Z(B)\geq{}&c\lb\sum_{p=l+1}^n\sum_{q=1}^lz_pz_q(z_p-z_q)+\sum_{p=m+2}^n\sum_{q=l+1}^{m+1}z_pz_q(z_p-z_q)\rb
\ea
We control the first sum using the `convexity estimate' $z_1\geq -\delta F-C_\delta$ as follows:
\ba
\sum_{p=l+1}^{n}\sum_{q=1}^lz_pz_q(z_p-z_q)\geq {}&(n-l)z_{n}\sum_{q=1}^lz_q(z_{n}-z_q)\\
\geq {}&2(n-l)c^2F^2\sum_{q=1}^lz_q\nonumber\\
\geq {}&-2(n-l)c^2F^2(\delta F+C_\delta)\nonumber\\
\equiv {}&-cF^2(\delta F+C_{\delta})\,,\label{eq:Zest1}
\ea
where we estimated $-c \leq z_i/F\leq c$ for each $i$.

Recall $m\leq n-2$. Then we may decompose the good second term in the brackets on the right hand side of \eqref{eq:Z} as
\bann
\sum_{p=m+2}^n\sum_{q=l+1}^{m+1}z_pz_q(z_p-z_q)={}&\lb\sum_{p=m+2}^n\sum_{q=l+1}^{m+1}z_pz_q(z_p-z_q)-F^2\sum_{k=1}^lz_k\rb+F^2\sum_{k=1}^lz_k
\eann
where $l$ is again the number of non-positive eigenvalues. Consider first the term in the brackets, $S_1:=\sum_{p=m+2}^n\sum_{q=l+1}^{m+1}z_pz_q(z_p-z_q)-F^2\sum_{k=1}^lz_k$. Since each of the terms is non-negative, $S_1$ can only vanish if $z_k=0$ for all $k\leq l$ and $z_p(z_p-z_q)=0$ for all $p>q>l$. That is, if there are no negative eigenvalues, and the positive ones (of which there are at least $n-m$) are all equal. But this implies $(z_1+\dots+z_{k+1})-c_m^{-1}f\geq 0$, which in turn implies $g=0<\varepsilon f$, a contradiction. We thus obtain a positive lower bound for the degree zero homogeneous quantity $S_1/(F^2G)$:
\bann
S_1 \geq {}&c F^2G
\eann
for some $c>0$. The remaining term is again easily estimated using the convexity estimate:
\bann
S_2:=F^2\sum_{k=1}^lz_k\geq {}&-cF^2(\delta F+C_{\delta})\,.
\eann
The claim follows.
\end{proof}

We note that the above estimate is only useful in the presence of the convexity estimate, Theorem \ref{thm:AC}, since in that case, for any $\delta>0$, there is a constant $C_\delta>0$ for which the set $\Gamma_{\delta,C_\delta}:=\{z\in \Gamma_0\,:\, z_i>-\delta f(z)-C_\delta\; \mbox{ for all }\; i\}$ is preserved by the flow.

\section{Proof of Theorem \ref{thm:CE}}

In order to prove Theorem \ref{thm:CE} it suffices to obtain for any $\varepsilon>0$ an upper bound on the function
$$
\Ges:=\lb \frac{G}{F}-\varepsilon\rb F^{\sigma}
$$
for some $\sigma>0$. We will use the estimates of Propositions \ref{prop:Zest} and \ref{prop:Qest} to obtain bounds on the space-time $L^p$-norms of the positive part of $\Ges$, so long as $p$ is sufficiently large and $\sigma$ sufficiently small, just as in \cite{HuSi99a,HuSi99b,HuSi09} (see also \cite{hypersurfaces} where these techniques are applied in the fully non-linear setting). The Stampacchia-De Giorgi iteration procedure introduced in \cite{Hu84} (see also \cite{HuSi99a,hypersurfaces}) then allows us to extract a supremum bound on $\Ges$.

We recall the following evolution equation from \cite{hypersurfaces}:
\begin{lem}
The function $\Ges$ satisfies the following evolution equation:
\ba
(\pd_t-\eL)\Ges ={}& F^{\sigma-1}(\dot G^{kl}\ddot F^{pq,rs}-\dot F^{kl}\ddot G^{pq,rs})\cd_k\W_{pq}\cd_l\W_{rs}+\frac{2(1-\sigma)}{F}\langle \cd \Ges,\cd F\rangle_F\nonumber\\
&-\frac{\sigma(1-\sigma)}{F^2}|\cd F|^2_F+\sigma
\Ges|\WW|^2_F\,,\label{eq:evolvGes} 
\ea
where $\langle u,v\rangle_F:=\dot F^{kl}u_ku_l$.
\end{lem}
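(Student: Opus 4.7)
The plan is to derive the stated identity by straightforward but careful application of the product and chain rules for the parabolic operator $(\pd_t-\eL)$ to the decomposition $\Ges = GF^{\sigma-1}-\varepsilon F^\sigma$. The essential ingredients are the two evolution equations already recorded, namely $(\pd_t-\eL)F = F|\WW|^2_F$ and, from \eqref{eq:evolvG},
\begin{equation*}
(\pd_t-\eL)G = Q(\cd\WW,\cd\WW) + G|\WW|^2_F,
\end{equation*}
where $Q(\cd\WW,\cd\WW) = (\dot G^{kl}\ddot F^{pq,rs}-\dot F^{kl}\ddot G^{pq,rs})\cd_k\W_{pq}\cd_l\W_{rs}$, together with the two bookkeeping identities
\begin{equation*}
(\pd_t-\eL)\varphi(F) = \varphi'(F)(\pd_t-\eL)F - \varphi''(F)|\cd F|^2_F,\qquad (\pd_t-\eL)(uv)=u(\pd_t-\eL)v+v(\pd_t-\eL)u-2\dot F^{kl}\cd_ku\,\cd_l v,
\end{equation*}
the second of which follows from the symmetry of $\dot F^{kl}$.

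First I would compute $(\pd_t-\eL)(G/F)$ by applying the product rule with $u=G$ and $v=F^{-1}$; the chain rule applied to $\varphi(x)=x^{-1}$ gives $(\pd_t-\eL)F^{-1}=-F^{-1}|\WW|^2_F-2F^{-3}|\cd F|^2_F$, and the two $G|\WW|^2_F$ contributions cancel cleanly, leaving
\begin{equation*}
(\pd_t-\eL)(G/F)=\frac{Q(\cd\WW,\cd\WW)}{F}+\frac{2}{F}\langle\cd(G/F),\cd F\rangle_F.
\end{equation*}
Next I would apply the chain rule to $\varphi(x)=x^\sigma$ to obtain $(\pd_t-\eL)F^\sigma=\sigma F^\sigma|\WW|^2_F+\sigma(1-\sigma)F^{\sigma-2}|\cd F|^2_F$, and then apply the product rule once more with $u=G/F-\varepsilon$ and $v=F^\sigma$. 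The reaction term $\sigma(G/F-\varepsilon)F^\sigma|\WW|^2_F$ packages itself as $\sigma\Ges|\WW|^2_F$, the Hessian-type term $F^\sigma\cdot F^{-1}Q(\cd\WW,\cd\WW)$ becomes the leading $F^{\sigma-1}Q$ term, and the gradient terms from $v(\pd_t-\eL)u$ and the cross term $-2\dot F^{kl}\cd_ku\cd_lv$ combine to a common factor of $2(1-\sigma)F^{\sigma-1}\langle\cd(G/F),\cd F\rangle_F$.

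Finally I would convert everything into gradients of $\Ges$ by noting that
\begin{equation*}
F^\sigma\cd(G/F)=\cd\Ges-\sigma F^{-1}\Ges\,\cd F,
\end{equation*}
so that the mixed inner product becomes $\tfrac{2(1-\sigma)}{F}\langle\cd\Ges,\cd F\rangle_F$ plus a residual multiple of $F^{-2}|\cd F|^2_F$. Collecting this residual together with the $\sigma(1-\sigma)F^{\sigma-2}|\cd F|^2_F$ produced earlier by $(\pd_t-\eL)F^\sigma$ gives the claimed $|\cd F|^2_F$-coefficient. The calculation contains no real obstacle; the only point requiring care is keeping track of which occurrences of $G/F$ get absorbed into $\Ges$ and which into $\cd\Ges$, so that the gradient terms assemble into $\langle\cd\Ges,\cd F\rangle_F$ with the correct coefficient and the $|\cd F|^2_F/F^2$ coefficient is consolidated correctly.
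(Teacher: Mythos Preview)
Your computation is correct and is the standard derivation; the paper itself does not give a proof here but simply cites \cite{hypersurfaces}, so there is nothing substantive to compare against. One remark worth making: if you carry your last step through, the two $|\cd F|^2_F$ contributions combine to $-\dfrac{\sigma(1-\sigma)}{F^2}\,\Ges\,|\cd F|^2_F$, i.e.\ with a factor of $\Ges$, which is consistent with the companion identity for $\eL\Ges$ quoted later in the proof of Lemma~\ref{lem:Lpestimate}; the statement as printed appears to be missing this $\Ges$ factor (it is harmless for the subsequent estimates, since $\Ges/F^\sigma$ is bounded and the term is in any case discarded by sign once $\sigma\in(0,1)$).
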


Now set $E:=\max\{\Ges,0\}$. We need to obtain space-time $L^p$-estimates for $E$. Let us first observe that integration by parts and application of Young's inequality, in conjunction with Lemma \ref{lem:homogeneousbounds} and Proposition \ref{prop:Qest}, yields the estimate (cf. \cite{hypersurfaces})
\ba
\frac{d}{dt}\int E^pd\mu \leq &-\left(A_1p(p-1)-A_2p^{\frac{3}{2}}\right)\int E^{p-2}|\cd\Ges|^2\, \,d\mu\nonumber\\
& -\left(B_1p-B_2p^{\frac{1}{2}}\right)\int E^{p}\frac{|\cd \WW|^2}{F^2}\,d\mu+C_1\sigma p\int E^p|\WW|^2d\mu\label{eq:Lpestimate}
\ea
for some positive constants $A_1$, $A_2$, $B_1$, $B_2$, $C_1$ which are independent of $\sigma$ and $p$.

To estimate the final term, we use Proposition \ref{prop:Zest} in a similar manner to \cite[Section 5]{HuSi09}. We first observe:
\begin{lem}\label{lem:Lpestimate}
There are positive constants $A_3, A_4, A_5, B_3, B_4, C_2$ which are independent of $p$ and $\sigma$ such that: 
\bann
\int E^p\frac{Z(\WW)}{F}\,d\mu \leq{}&\big(A_3p^{\frac{3}{2}}+A_4p^{\frac{1}{2}}+A_5\big)\int E^{p-2}|\cd\Ges |^2\,d\mu+\big(B_3p^{\frac{1}{2}}+B_4\big)\int E^{p}\frac{|\cd\WW|^2}{F^{2}}\,d\mu\,.
\eann
\end{lem}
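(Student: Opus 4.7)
The plan is to derive a pointwise Simons-type identity that expresses $Z(\WW)$ as a sum of second-derivative terms and a quadratic gradient remainder, and then extract the claimed estimate by one integration by parts per second-derivative term. Starting from the intrinsic commutator identity for $\cd^2\W$ on a hypersurface, combined with the Gauss and Codazzi equations in $\RR^{n+1}$, one obtains the Simons-type formula
\[
\dot F^{pq}\cd_p\cd_q\W_{ij} = \cd_i\cd_j F - \ddot F^{pq,rs}\cd_i\W_{pq}\cd_j\W_{rs} - F(\W^2)_{ij} + |\WW|^2_F\,\W_{ij},
\]
where we have used the chain rule $\cd_i\cd_j F = \dot F^{pq}\cd_i\cd_j\W_{pq} + \ddot F^{pq,rs}\cd_i\W_{pq}\cd_j\W_{rs}$ on the right. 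Contracting with $\dot G^{ij}$ and applying Euler's homogeneity relations $\dot F^{ij}\W_{ij}=F$ and $\dot G^{ij}\W_{ij}=G$ (noting $\dot G^{ij}(\W^2)_{ij}=|\WW|^2_G$) yields the key pointwise identity
\[
Z(\WW) = \dot G^{ij}\cd_i\cd_j F - \dot G^{ij}\dot F^{pq}\cd_p\cd_q\W_{ij} - \dot G^{ij}\ddot F^{pq,rs}\cd_i\W_{pq}\cd_j\W_{rs}.
\]

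Multiplying through by $E^p/F$ and integrating over $M$, the final term is controlled immediately: convexity of $f$ and degree $-1$ homogeneity of $\ddot F$ (via Lemma \ref{lem:homogeneousbounds}) yield $|\ddot F^{pq,rs}|\le C/F$, so its contribution is dominated by $C\int E^p|\cd\WW|^2/F^2\,d\mu$, absorbed into $B_4$. The two second-derivative terms are then integrated by parts once each; for instance
\[
\int\frac{E^p}{F}\dot G^{ij}\cd_i\cd_j F\,d\mu = -\int \cd_i\Big(\frac{E^p}{F}\dot G^{ij}\Big)\cd_j F\,d\mu,
\]
and Leibniz produces a single $p$-weighted integrand (from $\cd_i E^p = pE^{p-1}\cd_i E$) together with $p$-free remainders coming from $\cd(1/F)$ and from $\cd\dot G^{ij}=\ddot G^{ij,rs}\cd\W_{rs}$. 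The $\cd^2\W$ term is treated analogously, integrating by parts in one index and invoking Codazzi where convenient; the resulting integrands have entirely parallel structure.

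Each resulting integrand is then dispatched via Cauchy--Schwarz and Young's inequality. Using $|\cd F|+|\cd G|\le C|\cd\WW|$, $|\dot F|,|\dot G|\le C$ and $F|\ddot F|,F|\ddot G|\le C$ (all consequences of Lemma \ref{lem:homogeneousbounds} together with degree-counting), the $p$-free pieces combine into the constants $A_5$ and $B_4$. The $p$-weighted cross-terms, of schematic form $p\int E^{p-1}|\cd\Ges||\cd\WW|F^{-1}\,d\mu$, are split via $|ab|\le\eta a^2+b^2/(4\eta)$ applied with $a=E^{(p-2)/2}|\cd\Ges|$, $b=E^{p/2}|\cd\WW|/F$, and $\eta=\sqrt{p}$, producing the principal contributions $A_3p^{3/2}\int E^{p-2}|\cd\Ges|^2\,d\mu$ and $B_3p^{1/2}\int E^p|\cd\WW|^2F^{-2}\,d\mu$; the subleading $A_4p^{1/2}$ summand arises from an intermediate Cauchy--Schwarz applied to a mixed remainder term carrying a single factor of $p^{1/2}$.

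The principal obstacle is the bookkeeping needed to guarantee that no integrand accidentally carries two factors of $p$, which would degrade the leading power on the right from $p^{3/2}$ to $p^2$. This is avoided because each IBP introduces $p$ exactly once, via $\cd E^p=pE^{p-1}\cd E$, and because on the open set $\{E>0\}$ one has $\cd E=\cd\Ges$ with $E$ vanishing along the boundary, making the Young split above legitimate with no boundary contribution. The structural feature that makes the argument close is that after each IBP, all leftover zero-th order coefficients $\dot F,\dot G,F\ddot F,F\ddot G$ are degree zero homogeneous in curvature, and hence uniformly bounded along the flow by Lemma \ref{lem:homogeneousbounds}, independently of the scale of $F$.
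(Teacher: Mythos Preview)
Your argument is correct and yields the claimed estimate (in fact a slightly stronger one, with $A_4=A_5=0$). It differs from the paper's route in how the Simons-type identity is packaged. The paper first derives the identity
\[
\eL\Ges = -F^{\sigma-1}Q(\cd\WW,\cd\WW)+F^{\sigma-1}Z(\WW)+F^{\sigma-2}(F\dot G^{kl}-G\dot F^{kl})\cd_k\cd_lF+\frac{\sigma}{F}\Ges\eL F+\text{(first order)},
\]
solves for $Z$, multiplies by $E^pF^{-\sigma}$, and integrates. The full term $Q=(\dot G\ddot F-\dot F\ddot G)(\cd\WW,\cd\WW)$ then appears and is discarded using its sign from Proposition~\ref{prop:Qest}; the remaining second-order terms $\eL\Ges$, $(F\dot G-G\dot F)\cd^2F$ and $\Ges\eL F$ are integrated by parts, producing the cross-terms that are split by Young exactly as you describe. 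By contrast, you contract the raw Simons identity for $\W_{ij}$ with $\dot G^{ij}$ alone, so that only the single piece $\dot G^{ij}\ddot F^{pq,rs}\cd_i\W_{pq}\cd_j\W_{rs}$ survives; this you bound in norm via $F|\ddot F|\le C$ and $|\dot G|\le C$, never invoking Proposition~\ref{prop:Qest}. The trade-off: the paper's packaging makes the $|\cd\Ges|^2$ structure appear for free through $\eL\Ges$, while your approach is more elementary here but puts slightly more weight on the degree-zero bounds of Lemma~\ref{lem:homogeneousbounds}. One small correction: the bound $|\ddot F|\le C/F$ follows from homogeneity and compactness of $\overline\Gamma_0\cap S^n$, not from convexity of $f$ as you wrote.
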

\begin{proof}
As in \cite[Section 4]{hypersurfaces}, contraction of the commutation formula for $\cd^2\WW$ with $\dot F$ and $\dot G$ yields the identity
\bann
\eL\Ges ={}& -F^{\sigma-1}Q(\cd\WW,\cd\WW)+F^{\sigma-1}Z(\WW)+F^{\sigma-2}(F\dot G^{kl}-G\dot F^{kl})\cd_k\cd_lF\\
{}&+\frac{\sigma}{F}\Ges \mathcal{L}F-2\frac{(1-\sigma)}{F}\langle\cd F,\cd\Ges\rangle_F+\frac{\sigma(1-\sigma)}{F^2}\Ges |\cd F|^2_F\,.
\eann
The claim is now proved using integration by parts 
and Young's inequality, with the help of Lemma \ref{lem:homogeneousbounds} and Propostion \ref{prop:Qest} (cf. \cite[Lemma 4.2]{hypersurfaces}).
\end{proof}

\begin{cor}
For all $\varepsilon>0$ there exist constants $\ell>0$ and $L>0$ such that for all $p>L$ and $0<\sigma <\ell p^{-\frac{1}{2}}$ there is a constant $K=K_{\varepsilon,\sigma,p}$ for which the following estimate holds:
\bann
\int (\Ges)_+^pd\mu\leq \int \lb \Ges(\cdot,0)\rb_+^pd\mu_0 + tK\mu_0(M)\,, 
\eann
where $\mu_0$ is the measure induced on $M$ by the initial immersion.
\end{cor}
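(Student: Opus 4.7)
The plan is to integrate the evolution inequality \eqref{eq:Lpestimate} in time, after absorbing the reaction term $C_1\sigma p\int E^p|\WW|^2\,d\mu$ into the two negative gradient terms with the help of the Poincar\'e-type inequality of Lemma \ref{lem:Lpestimate}. The bridge from $|\WW|^2$ to the quantity $Z(\WW)/F$ appearing in that lemma is supplied by Proposition \ref{prop:Zest}.

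First I would fix $\delta=\varepsilon/2$ in Proposition \ref{prop:Zest} and choose a threshold $F_0=F_0(\varepsilon)>0$ large enough that the resulting inequality forces the pointwise bound $F^2\leq (4/(\gamma_\varepsilon\varepsilon))\,Z(\WW)/F$ on $\{E>0\}\cap\{F\geq F_0\}$. On the complementary set $\{F<F_0\}$ one has $|\WW|^2\leq cF_0^2$, while the uniform upper bound $G/F\leq C$ (which follows from the corollary after Lemma \ref{lem:Qest} applied to $G_1$, together with the lower bound $G_2\geq cF$) yields $E\leq CF_0^\sigma$. Since $(m+1)$-convexity forces $H>0$, the induced measure is non-increasing, so $\mu(M_t)\leq\mu_0(M)$. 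Combining the contributions from the two regions yields
\[
C_1\sigma p\int E^p|\WW|^2\,d\mu\leq C_1\sigma p\,K_1(\varepsilon,\sigma,p)\,\mu_0(M)+C_2\sigma p\int E^p\frac{Z(\WW)}{F}\,d\mu.
\]

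Substituting this back into \eqref{eq:Lpestimate} and applying Lemma \ref{lem:Lpestimate} to the remaining $Z/F$ integral, the two gradient terms pick up positive coefficients of orders $\sigma p^{5/2}$ and $\sigma p^{3/2}$, to be compared with the negative coefficients $A_1p(p-1)-A_2p^{3/2}\sim p^2$ and $B_1p-B_2p^{1/2}\sim p$ already present in \eqref{eq:Lpestimate}. Taking $L$ large and $\ell$ small so that the restrictions $p>L$ and $\sigma<\ell p^{-1/2}$ force each positive contribution to be at most half of the corresponding negative one, both gradient integrals are absorbed entirely. What remains is
\[
\frac{d}{dt}\int E^p\,d\mu\leq K\,\mu_0(M),\qquad K=K_{\varepsilon,\sigma,p}:=C_1\sigma p\,K_1(\varepsilon,\sigma,p),
\]
and integration in $t$ delivers the claimed bound.

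The main point requiring care is the bookkeeping: one must check that the constants extracted from the small-$F$ region (in particular $F_0$, the bound $C$ on $G/F$, and hence $K_1$) depend only on $\varepsilon,\sigma,p$ and on the initial data, never on $t$. Time-independence of $F_0$ is immediate because Proposition \ref{prop:Zest} is a pointwise algebraic statement; time-independence of the $G/F$ bound rests on the maximum-principle arguments already used to control $G_1/F$ and $G_2$ in the previous section; and the bound $\mu(M_t)\leq\mu_0(M)$ is simply area monotonicity for the flow. Together these are precisely what produce the linear-in-$t$ form of the estimate, rather than the exponential form one might naively fear, and this linear form is exactly the shape required for the Stampacchia-De Giorgi iteration that is to follow.
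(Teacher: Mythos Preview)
Your argument is correct and follows the same overall strategy as the paper: use Proposition \ref{prop:Zest} with $\delta=\varepsilon/2$ to control $E^p|\WW|^2$ by $E^p Z(\WW)/F$ plus a bounded remainder, feed this into Lemma \ref{lem:Lpestimate}, and then absorb the resulting positive gradient coefficients (of orders $\sigma p^{5/2}$ and $\sigma p^{3/2}$) into the negative ones from \eqref{eq:Lpestimate} by choosing $p$ large and $\sigma<\ell p^{-1/2}$.

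The one genuine variation is in how you dispose of the lower-order term $\gamma_\delta F$ coming from Proposition \ref{prop:Zest}. You split pointwise into $\{F\geq F_0\}$ and $\{F<F_0\}$, bounding everything explicitly on the low-curvature region and invoking area monotonicity $\mu_t(M)\leq\mu_0(M)$. The paper instead applies Young's inequality in the form $F\leq \sigma p\,F^2+K_{\sigma,p}F^{-\sigma p}$, and then uses $E^p F^{-\sigma p}\leq C^p$ (from $E\leq CF^\sigma$) to obtain the uniform constant. Both routes yield the same pointwise bound $E^p|\WW|^2\leq K_{\varepsilon,\sigma,p}+c_\varepsilon E^p Z(\WW)/F$; your splitting is slightly more transparent and avoids the auxiliary constraint $\sigma p\leq \varepsilon\gamma_1/(4\gamma_\delta)$ that the paper's Young-inequality step introduces, while the paper's version is a little slicker algebraically. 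Either way, area monotonicity is needed at the end to replace $\mu_t(M)$ by $\mu_0(M)$, and your justification of it via $(m+1)$-convexity $\Rightarrow H>0$ is correct.
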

\begin{proof}
Recall Proposition \ref{prop:Zest}. Setting $\delta=\varepsilon/2$ we obtain
\bann
\frac{Z(\WW)}{F}\geq \frac{\varepsilon}{2}\gamma_1 F^2-\gamma_{2} F
\eann
whenever $G-\varepsilon F>0$. By Young's inequality, for all $\sigma p>0$ there is a constant $K_{\sigma,p}$ such that
\bann
F\leq \sigma p F^2+ K_{\sigma,p} F^{-\sigma p}\,,
\eann
so that
\bann
\lb\frac{\varepsilon}{2}\gamma_1-\sigma p \gamma_2 \rb F^2\leq K_{\sigma,p} F^{-\sigma p}+\frac{Z(\WW)}{F}\,.
\eann
If we are careful to ensure $\sigma p \gamma_2\leq \varepsilon\gamma_1/4$, we obtain
\bann
\frac{\varepsilon\gamma_1}{4}F^2\leq K_{\sigma,p} F^{-\sigma p}+\frac{Z(\WW)}{F}\,.
\eann
Since $\Ges$ is bounded by $F^{\sigma}$, and $|\WW|^2$ is bounded by $F^2$, we obtain
\bann
E^p|\WW|^2\leq K_{\varepsilon,\sigma,p}+ c_\varepsilon E^p\frac{Z(\WW)}{F}\,,
\eann
for some constants $K_{\varepsilon,\sigma,p}>0$ depending on $\varepsilon$, $\sigma$ and $p$, and $c_\varepsilon>0$ depending on $\varepsilon$ (but independent of $\sigma$ and $p$). 

Combining Lemma \ref{lem:Lpestimate} and inequality \eqref{eq:Lpestimate} now  yields
\bann
\frac{d}{dt}\int E^pd\mu \leq {}& K_{\varepsilon,\sigma,p}\mu_0(M) -\left(\alpha_0p^2-\alpha_1\sigma p^{\frac{5}{2}}-\alpha_2 p^{\frac{3}{2}}-\alpha_3 p\right)\int E^{p-2}|\Ges|^2\, \,d\mu\nonumber\\
& -\left(\beta_0 p-\beta_1\sigma p^{\frac{3}{2}}-\beta_2\sigma p-\beta_3p^{\frac{1}{2}} \right)\int E^{p}\frac{|\cd\WW|^2}{F^2}\,d\mu\,. 
\eann
for some positive constants $\alpha_i$ and $\beta_i$, which depend on $\varepsilon$  but not on $\sigma$ or $p$, and $K_{\varepsilon,\sigma,p}$, which depends on $\varepsilon$, $\sigma$ and $p$. 

It is clear that $L>0$ and $\ell>0$ may be chosen such that 
\bann
\left(\alpha_0p^2-\alpha_1\sigma p^{\frac{5}{2}}-\alpha_2 p^{\frac{3}{2}}-\alpha_3 p\right)\geq 0
\eann
and
\bann
\left(\beta_0 p-\beta_1\sigma p^{\frac{3}{2}}-\beta_2\sigma p-\beta_3p^{\frac{1}{2}} \right) \geq 0
\eann
for all $p>L$ and $0<\sigma<\ell p^{-\frac{1}{2}}$. The claim then follows by integrating with respect to the time variable.
\end{proof}

The proof of Theorem \ref{thm:CE} is completed by proceeding with Huisken's Stampacchia-De Giorgi iteration scheme. We omit these details as the arguments required already appear in \cite[Section 5]{hypersurfaces} with no significant changes necessary.

\end{document}